\documentclass[11pt]{amsart}
\usepackage[centering,includeheadfoot,margin=2.5 cm]{geometry}
\usepackage{graphics,enumitem,epsfig,textcomp}
\usepackage{amsfonts,amsmath,amssymb,euscript,color,mathrsfs}

\usepackage{hyperref}


\numberwithin{equation}{section}
\newtheorem{theorem}{Theorem}

\newtheorem{lemma}{Lemma}
\newtheorem{proposition}{Proposition}
\newtheorem{remark}{Remark}


\def\d{\,\mathrm{d}}
\def\eps{\varepsilon}
\def\N{\mathbb{N}}
\def\R{\mathbb{R}}
\def\C{\hbox{\rlap{\kern.24em\raise.1ex\hbox
      {\vrule height1.3ex width.9pt}}C}}
\def\P{\hbox{\rlap{I}\kern.16em P}}
\def\Q{\hbox{\rlap{\kern.24em\raise.1ex\hbox
      {\vrule height1.3ex width.9pt}}Q}}
\def\M{\hbox{\rlap{I}\kern.16em\rlap{I}M}}
\def\Z{\hbox{\rlap{Z}\kern.20em Z}}
\def\({\begin{eqnarray}}
\def\){\end{eqnarray}}
\def\[{\begin{eqnarray*}}
\def\]{\end{eqnarray*}}
\def\part#1#2{\frac{\partial #1}{\partial #2}}

\def\grad{\nabla}
\def\Norm#1{\left\| #1 \right\|}
\def\pmb#1{\setbox0=\hbox{$#1$}
  \kern-.025em\copy0\kern-\wd0
  \kern-.05em\copy0\kern-\wd0
  \kern-.025em\raise.0433em\box0 }
\def\bar{\overline}

\def\tot#1#2{\frac{\d #1}{\d #2}} 
\def\laplace{\Delta}
\def\d{\,\mathrm{d}}
\def\N{\mathbb{N}}
\def\R{\mathbb{R}}

\def\A{\mathbb{A}}

\def\eps{\varepsilon}

\def\E{\mathcal{E}}

\def\P{\mathbb{P}}
\def\Q{\mathbb{Q}}
\def\X{\mathbb{X}}

\def\embdd{\hookrightarrow}
\def\wto{\rightharpoonup}

\begin{document}


\centerline{{\Large Rigorous Continuum Limit for the Discrete Network Formation Problem}}
\vskip 7mm

\centerline{
{\large Jan Haskovec}\footnote{Mathematical and Computer Sciences and Engineering Division,
King Abdullah University of Science and Technology,
Thuwal 23955-6900, Kingdom of Saudi Arabia; 
{\it jan.haskovec@kaust.edu.sa}}\qquad
{\large Lisa Maria Kreusser}\footnote{Department of Applied Mathematics and Theoretical Physics (DAMTP), University of Cambridge, Wilberforce Road, Cambridge CB3 0WA, UK;
{\it L.M.Kreusser@damtp.cam.ac.uk}}\qquad
{\large Peter Markowich}\footnote{Mathematical and Computer Sciences and Engineering Division,
King Abdullah University of Science and Technology,
Thuwal 23955-6900, Kingdom of Saudi Arabia;
Faculty of Mathematics, University of Vienna, Oskar-Morgenstern-Platz 1, 1090 Vienna, Austria;
{\it peter.markowich@kaust.edu.sa; peter.markowich@univie.ac.at}}
}
\vskip 10mm


\noindent{\bf Abstract.}
Motivated by recent physics papers describing the formation of biological transport networks
we study a discrete model proposed by Hu and Cai consisting of an energy consumption function
constrained by a linear system on a graph.
For the spatially two-dimensional rectangular setting
we prove the rigorous continuum limit of the constrained energy functional
as the number of nodes of the underlying graph tends to infinity and the edge lengths shrink to zero uniformly.
The proof is based on reformulating the discrete energy functional as a sequence of integral functionals
and proving their $\Gamma$-converge towards a continuum energy functional.
\vskip 7mm

\noindent{\bf Key words:} Network formation; $\Gamma$-convergence; Continuum limit; Finite Element Discretization.
\vskip 7mm

\noindent{\bf Math. Class. No.:} 35K55; 92C42; 65M60
\vskip 7mm


\section{Introduction}\label{sec:Intro}
In this paper we derive the rigorous continuum limit of the
discrete network formation model of Hu and Cai \cite{Hu-Cai}.
The model is posed on an a priori given graph $\mathbb{G} = (\mathbb{V},\mathbb{E})$, consisting of the set of vertices (nodes) $\mathbb{V}$
and the set of unoriented edges (vessels) $\mathbb{E}$.
Any pair of vertices $i$, $j\in \mathbb{V}$ is connected by at most one edge $(i,j)\in \mathbb{E}$,
such that the corresponding graph $(\mathbb{V}, \mathbb{E})$ is connected. The lengths $L_{ij}>0$ of the vessels $(i,j)\in \mathbb{E}$
are given a priori and fixed.
The adjacency matrix of the graph $(\mathbb{V},\mathbb{E})$ is denoted by $\A$, i.e., $\A_{ij}=1$ if $(i,j)\in \mathbb{E}$, otherwise $\A_{ij}=0$.

Let us emphasize that by fixing $(\mathbb{V},\mathbb{E})$, the set of possible flow directions in the network is also fixed.
For each node $j\in \mathbb{V}$ we prescribe the strength of source/sink $S_j\in\R$ and we adopt the convention
that $S_j>0$ denotes sources, while $S_j<0$ sinks. We also allow for $S_j=0$, i.e., no external
in- or outgoing flux in this node. We impose the global mass conservation
\(   \label{assS}
   \sum_{j\in \mathbb{V}} S_j = 0.
\)
We denote $C_{ij}$ and, resp., $Q_{ij}$ the conductivity and, resp., the flow through the vessel $(i,j)\in \mathbb{E}$.
Note that the flow is oriented and we adopt the convention that $Q_{ij}>0$ means
net flow from node $j\in \mathbb{V}$ to node $i\in \mathbb{V}$.
An overview of the notation is provided in Table \ref{table:notation}.
We assume low Reynolds number of the flow through the network, so that
the flow rate through a vessel $(i,j)\in \mathbb{E}$ is proportional to its conductivity and the pressure drop between its two ends, i.e.,
\(  \label{Q_i}
   Q_{ij} = C_{ij} \frac{P_j - P_i}{L_{ij}}.
\)
Local conservation of mass is expressed in terms of the Kirchhoff law,
\(  \label{Kirchhoff0}
   \sum_{i\in \mathbb{V}} \A_{ij} C_{ij} \frac{P_j - P_i}{L_{ij}} = S_j \qquad\mbox{for all } j\in \mathbb{V}.
\)

Note that for any given vector of conductivities $C := (C_{ij})_{(i,j)\in \mathbb{E}}$,
\eqref{Kirchhoff0} represents a linear system of equations for the vector of pressures
$(P_j)_{j\in \mathbb{V}}$. The system has a solution, unique up to an additive constant,
if and only if the graph with edge weights given by $C$ is connected \cite{Gross-Yellen},
where only edges with positive conductivities are taken into account (i.e., edges with zero conductivity
are discarded).

\begin{table}
\caption{Notation. (*) denotes variables that are given as data.}
\label{table:notation}
\begin{tabular}{p{2cm}p{5cm}p{3cm}}
\hline\noalign{\smallskip}
Variable & Meaning & Related to \\
\noalign{\smallskip}\hline\noalign{\smallskip}
$S_j\; (*)$ & intensity of source/sink & vertex $j\in \mathbb{V}$ \\
$P_j$ & pressure & vertex $j\in \mathbb{V}$ \\
$L_{ij}\; (*)$ & length of an edge & edge $(i,j)\in \mathbb{E}$ \\
$Q_{ij}$ & flow from $j\in \mathbb{V}$ to $i\in \mathbb{V}$ & edge $(i,j)\in \mathbb{E}$ \\
$C_{ij}$ & conductivity & edge $(i,j)\in \mathbb{E}$ \\
\noalign{\smallskip}\hline\noalign{\smallskip}
\end{tabular}
\end{table}

Assuming that the material cost for an edge $(i,j)\in \mathbb{E}$ of the network is proportional
to a power $C_{ij}^\gamma$ of its conductivity,
Hu and Cai \cite{Hu-Cai} consider the energy consumption function of the form
\(  \label{E0}
   E[C] := \frac12 \sum_{i\in \mathbb{V}}\sum_{j\in \mathbb{V}} \left( \frac{Q_{ij}^2}{C_{ij}} + \frac{\nu}{\gamma} C_{ij}^\gamma \right) \A_{ij} L_{ij},
\)
where $\nu>0$ is the metabolic coefficient and $Q_{ij}$ is given by \eqref{Q_i},
where the pressure drop $\frac{P_j - P_i}{L_{ij}}$ is determined by \eqref{Kirchhoff0}.
The first part of the energy consumption \eqref{E0} represents the kinetic energy (pumping power)
of the material flow through the vessels, and we shall call it \emph{pumping term} in the sequel.
The second part represents the metabolic cost of maintaining the network
and shall be called \emph{metabolic term}.
For instance, the metabolic cost for a blood vessel is proportional to
its cross-section area \cite{Murray}.
Modeling blood flow by Hagen-Poiseuille's law,
the conductivity of the vessel is proportional to the square of its cross-section area.
This implies $\gamma=1/2$ for blood vessel systems.
For leaf venations, the material cost is proportional to
the number of small tubes, which is proportional to $C_{ij}$, and
the metabolic cost is due to the effective loss of the
photosynthetic power at the area of the venation cells, which
is proportional to $C_{ij}^{1/2}$. Consequently, the effective
value of $\gamma$ typically used in models of leaf venation lies between $1/2$ and $1$, \cite{Hu-Cai}.
Hu and Cai showed that the optimal networks corresponding to minimizers of \eqref{Kirchhoff0}-\eqref{E0}
exhibit a phase transition at $\gamma=1$, with a ``uniform sheet'' (the network is tiled with loops)
for $\gamma>1$ and a ``loopless tree'' for $\gamma<1$, see also \cite{HMR}.
Moreover, they consider the gradient flow of the energy \eqref{E0} constrained by the Kirchhoff law \eqref{Kirchhoff0},
which leads to the ODE system for the conductivities $C_{ij}$,
\[  \label{HC_ODE}
   \tot{C_{ij}}{t} = \left( \frac{Q_{ij}^2}{C_{ij}^2} - \nu C_{ij}^{\gamma-1}\right) L_{ij} \qquad \mbox{for }(i,j)\in \mathbb{E},
\]
coupled to the Kirchhoff law \eqref{Kirchhoff0} through \eqref{Q_i}.
This system represents an adaptation model which dynamically
responds to local information and can naturally incorporate fluctuations in the flow.

This paper focuses on deriving the rigorous continuum limit of the energy functional \eqref{Kirchhoff0}-\eqref{E0}
as the number of nodes of the underlying graph tends to infinity and the edge lengths $L_{ij}$ tend uniformly to zero.
In a general setting with a sequence of unstructured graphs this is a mathematically very challenging task.
In particular, one has to expect that the object obtained in the limit will depend on the structural and statistical properties of the graph sequence
(connectivity, edge directions and density etc.).
Therefore, we consider the particular setting where the graphs correspond to regular equidistant meshes
in 1D and 2D. As we explain in Section \ref{sec:1D}, the energy minimization problem for \eqref{Kirchhoff0}-\eqref{E0} in the
one-dimensional case is in fact trivial, and the form of the limiting functional is obvious.
However, we use this setting as a toy example and carry out the rigorous limit passage anyway.
The reason is that in the 1D setting we avoid most of the technical peculiarities of the two-dimensional case
and we can focus on the essential idea of the method.
Equipped with this insight, we shall turn to the two-dimensional case (Section \ref{sec:2D}), where the graph is
an equidistant rectangular mesh on a square-shaped domain $\Omega$.

In both the 1D and 2D cases, it is necessary to adopt the additional assumption that the conductivities
are a priori bounded away from zero. In particular, we introduce a modification of the system \eqref{Kirchhoff0}-\eqref{E0}
where the conductivities are of the form $r+C_{ij}$, where $r>0$ is a fixed global constant.
The reason is that we need to guarantee the solvability of the Poisson equation \eqref{Poisson2} below,
which will be obtained in the continuum limit.
Moreover, in the 2D case, the additive terms in the energy functional have to be scaled by the square
of the edge length $L_{ij}$. This is due to the fact that we are embedding the inherently one-dimensional edges of the graph
into two spatial dimensions; see \cite[Section 3.2]{HKM} for details.
Thus, we shall work with the energy functional
\( \label{E1}
   E[C] := \frac12 \sum_{i\in \mathbb{V}}\sum_{j\in \mathbb{V}} \left( \frac{Q_{ij}^2}{r+C_{ij}} + \frac{\nu}{\gamma} (r+C_{ij})^\gamma \right) \A_{ij} L_{ij}^d,
\)
where $d=1$, $2$ is the space dimension, coupled to the (properly rescaled) Kirchhoff law
\(  \label{Kirchhoff1}
   \sum_{i\in \mathbb{V}} \A_{ij} (r+C_{ij}) \frac{P_j - P_i}{L_{ij}} = L_j S_j \qquad\mbox{for all } j\in \mathbb{V}
\)
through
\(  \label{Q_i1}
   Q_{ij} = (r+C_{ij}) \frac{P_j-P_i}{L_{ij}},
\)
where $L_j$ are (abstract) weights that scale linearly with the mean edge length;
see \cite[Section 3.1]{HKM} for details about the scaling in \eqref{Kirchhoff1}.
The main benefit of this paper is the rigorous derivation of the limiting energy functional,
which for the two-dimensional case is of the form
\(  \label{E2}
   \E[c] = \int_\Omega \grad p[c] \cdot (rI + c)\grad p[c] + \frac{\nu}{\gamma} \left( |r+c_1|^{\gamma} + |r+c_2|^{\gamma}  \right)\d\mathbf{x},
\)
with $\mathbf{x} = (x,y)\in\R^2$ and where $p[c]\in H^1(\Omega)$ is a weak solution of the Poisson equation
\(   \label{Poisson2}
   - \grad\cdot ((r I + c) \grad p) = S
\)
subject to no-flux boundary conditions on $\partial\Omega$,
where $I$ is the unit matrix and $c$ is the diagonal $2\times 2$-tensor
\(  \label{c}
   c= \begin{pmatrix} c_1 & 0 \\ 0 & c_2 \end{pmatrix}.
\)
{Here, $S\in L^2(\Omega)$ denotes the source/sink term and in analogy to \eqref{assS} we require $\int_\Omega S\d\mathbf{x}=0$.}
The derivation is based on three steps:
\begin{enumerate}
\item
Establish a connection between the discrete solutions of the Kirchhoff law \eqref{Kirchhoff1}
and weak solutions of the Poisson equation \eqref{Poisson2};
see Section \ref{subsec:Reform1D} in 1D and Sections \ref{subsec:FEM-Poisson}, \ref{subsec:Reform2D} in 2D.
\item
Reformulate the discrete energy functional \eqref{E1}
as an integral functional defined on the set of bounded functions;
see Section \ref{subsec:Reform1D} in 1D and Section \ref{subsec:Reform2D} in 2D.
\item
Show that the sequence of integral functionals $\Gamma$-converges to the energy functional \eqref{E2};
see Section \ref{subsec:conv1D} in 1D and Section \ref{subsec:conv2D} in 2D.
See, e.g., \cite{DalMaso, Braides} for details about $\Gamma$-convergence.
\end{enumerate}
The $\Gamma$-convergence opens the door for constructing global minimizers of \eqref{E2}--\eqref{Poisson2}
as limits of sequences of minimizers of the discrete problem \eqref{E1}--\eqref{Kirchhoff1}.
However, for this we need strong convergence of the minimizers in an appropriate topology.
In agreement with \cite{HMP15, HMPS16, bookchapter} we introduce diffusive terms into the discrete energy functionals,
modeling random fluctuations in the medium (Section \ref{subsec:1Ddiff} for 1D and Section \ref{subsec:diff2D} in 2D).
The diffusive terms provide compactness of the minimizing sequences in a suitable topology
and facilitate the construction of global minimizers of \eqref{E2}--\eqref{Poisson2}.

Let us note that the steepest descent minimization procedure for \eqref{E2}--\eqref{Poisson2} is represented
by the formal $L^2$-gradient flow. This leads to the system of partial differential equations for $c_1=c_1(t,x,y)$,
$c_2=c_2(t,x,y)$,
\(  \label{cPDE1}
  \begin{aligned}
   \partial_t c_1 &=& (\partial_x p)^2 - \nu (r+c_1)^{\gamma-1},\\
   \partial_t c_2 &=& (\partial_y p)^2 - \nu (r+c_2)^{\gamma-1},
   \end{aligned}
\)
subject to homogeneous Dirichlet boundary data and coupled to \eqref{Poisson2} through \eqref{c}.
The existence of weak solutions and their properties are studied in \cite{HKM}.

{
Finally, let us remark that \cite{Hu} proposed a different PDE system,
derived from the discrete model \cite{Hu-Cai} by certain phenomenological considerations
(laws of porous medium flow, see \cite{bookchapter} for details).
The system consists of a parabolic reaction-diffusion equation for the vector-valued conductivity
field, constrained by a Poisson equation for the pressure,
and was studied in the series of papers \cite{HMP15, HMPS16, AAFM, bookchapter}.
However, a rigorous derivation of the model is still lacking;
moreover, no explicit connection to the system \eqref{cPDE1} has been established so far.
}

\section{An auxiliary Lemma}\label{sec:Auxiliary}
\begin{lemma}\label{lem:weak-strong}
Fix $r>0$, a bounded domain $\Omega\subset\R^d$ with $d\geq 1$, {and $S \in L^2(\Omega)$.}
Let $(c^N)_{N\in\N} \subset L^\infty(\Omega)$ be a sequence of nonnegative, essentially bounded functions on $\Omega$,
such that $c^N \to c\in L^2(\Omega)$ in the norm topology of $L^2(\Omega)$.
Let $(p^N)_{N\in\N}\subset H^1(\Omega)$ be a sequence of zero-average
weak solutions of the Poisson equation
\(   \label{auxPoissonN}
   - \grad\cdot ( (r+c^N)\grad p^N) = S
\)
subject to homogeneous Neumann boundary conditions on $\partial\Omega$.
Then $\grad p^N$ converges to $\grad p$ and $\sqrt{c^N}\grad p^N$ converges to $\sqrt{c}\grad p$
strongly in $L^2(\Omega)$,
where $p$ is the zero-average weak solution of
\(   \label{auxPoisson}
   - \grad\cdot ( (r+c)\grad p) = S
\)
subject to homogeneous Neumann boundary conditions on $\partial\Omega$.
In particular, we have
\(  \label{aux:lim}
   \lim_{N\to\infty} \int_\Omega (r+c^N) |\grad p^N|^2 \d \mathbf{x} =
   \int_\Omega (r+c) |\grad p|^2 \d \mathbf{x}.
\)
\end{lemma}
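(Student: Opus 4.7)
The basic strategy is to establish weak $H^1$ compactness of the $p^N$'s, identify the weak limit as $p$, and then promote weak to strong convergence via a Radon–Riesz argument driven by convergence of the energy.

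First I would derive a uniform $H^1$ bound. Testing the weak formulation of \eqref{auxPoissonN} against $p^N$ itself yields
\[
   r\|\nabla p^N\|_{L^2}^2 \le \int_\Omega (r+c^N)|\nabla p^N|^2\,\d\mathbf{x} = \int_\Omega S\,p^N\,\d\mathbf{x},
\]
and the Poincaré inequality for zero-average functions converts this into a uniform bound on $\|p^N\|_{H^1}$. Passing to a subsequence, $p^N \rightharpoonup p_*$ weakly in $H^1$, strongly in $L^2$ by Rellich, with $p_*$ of zero average. To identify $p_*=p$, I pass to the limit in the weak formulation tested against $\phi\in C^\infty(\bar\Omega)$: since $\nabla\phi\in L^\infty$, the strong $L^2$-convergence $c^N\to c$ gives $(r+c^N)\nabla\phi\to(r+c)\nabla\phi$ strongly in $L^2$, which pairs with the weak convergence $\nabla p^N\rightharpoonup\nabla p_*$ in $L^2$ to yield $\int(r+c)\nabla p_*\cdot\nabla\phi=\int S\phi$. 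Uniqueness of the zero-average weak solution then forces $p_*=p$, so the whole sequence converges.

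Next I need to control the weighted gradient. The elementary inequality $(\sqrt{a}-\sqrt{b})^2\le|a-b|$ for $a,b\ge 0$, combined with $\|c^N-c\|_{L^1}\le|\Omega|^{1/2}\|c^N-c\|_{L^2}\to 0$, gives $\sqrt{c^N}\to\sqrt{c}$ strongly in $L^2(\Omega)$. Since $\|\sqrt{c^N}\nabla p^N\|_{L^2}$ is bounded by the energy estimate, along a subsequence $\sqrt{c^N}\nabla p^N\rightharpoonup\zeta$ weakly in $L^2$. Testing against $\phi\in C_c^\infty(\Omega)$ and writing $\int\sqrt{c^N}\nabla p^N\cdot\phi=\int\nabla p^N\cdot(\sqrt{c^N}\phi)$, the strong convergence $\sqrt{c^N}\phi\to\sqrt{c}\phi$ in $L^2$ paired with $\nabla p^N\rightharpoonup\nabla p$ identifies $\zeta=\sqrt{c}\nabla p$.

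The crucial step is the energy identity. Using $p^N\to p$ strongly in $L^2$,
\[
   \int_\Omega(r+c^N)|\nabla p^N|^2\,\d\mathbf{x}=\int_\Omega S\,p^N\,\d\mathbf{x}\;\longrightarrow\;\int_\Omega S\,p\,\d\mathbf{x}=\int_\Omega(r+c)|\nabla p|^2\,\d\mathbf{x},
\]
which establishes \eqref{aux:lim}. The main obstacle—and the key insight—is that this tells us only that the \emph{sum} $r\|\nabla p^N\|_{L^2}^2+\|\sqrt{c^N}\nabla p^N\|_{L^2}^2$ converges, not each piece separately. To split them, I invoke weak lower semicontinuity of the $L^2$-norm twice:
\[
   \liminf_{N\to\infty}\|\nabla p^N\|_{L^2}^2\ge\|\nabla p\|_{L^2}^2,\qquad \liminf_{N\to\infty}\|\sqrt{c^N}\nabla p^N\|_{L^2}^2\ge\|\sqrt{c}\nabla p\|_{L^2}^2.
\]
Since the weighted sum of these two nonnegative quantities converges to the corresponding sum of the liminfs, equality is forced in each inequality, and thus both norms converge individually. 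Finally, the Radon–Riesz property (weak convergence plus convergence of norms in a Hilbert space implies strong convergence) yields $\nabla p^N\to\nabla p$ and $\sqrt{c^N}\nabla p^N\to\sqrt{c}\nabla p$ strongly in $L^2(\Omega)$, completing the proof.
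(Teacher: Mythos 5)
Your proposal is correct and follows essentially the same route as the paper's proof: uniform $H^1$ bound via testing with $p^N$, identification of the weak limit, the energy identity $\int_\Omega(r+c^N)|\grad p^N|^2\d\mathbf{x}=\int_\Omega Sp^N\d\mathbf{x}\to\int_\Omega Sp\d\mathbf{x}=\int_\Omega(r+c)|\grad p|^2\d\mathbf{x}$, and then splitting the converging sum into its two weakly lower semicontinuous pieces to force convergence of each norm, concluding by Radon--Riesz. Your explicit verification that $\sqrt{c^N}\grad p^N\wto\sqrt{c}\grad p$ (via $\sqrt{c^N}\to\sqrt{c}$ in $L^2$) is a small point the paper leaves implicit, but the argument is the same.
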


\begin{remark}
Note that we do \emph{not} assume that $(c^N)_{N\in\N}$ is uniformly bounded in $L^\infty(\Omega)$,
nor that $c\in L^\infty(\Omega)$.
\end{remark}

\begin{proof}
Using $p^N$ as a test function in \eqref{auxPoissonN}, due to the nonnegativity of $c^N$,
we have
\(   \label{auxEst1}
  r \Norm{\grad p^N}_{L^2(\Omega)}^2 \leq \int_\Omega (r+c^N) |\grad p^N|^2 \d \mathbf{x} &=& \int_\Omega S p^N \d \mathbf{x} \\
      &\leq& \frac{1}{2\eps} \Norm{S}_{L^2(\Omega)}^2 + \frac{\eps C_P}{2} \Norm{\grad p^N}_{L^2(\Omega)}^2,  \nonumber
\)
where $C_P$ is the Poincar\'{e} constant. With a suitable choice of $\eps>0$ we obtain a uniform estimate
on $p^N$ in $H^1(\Omega)$.
Consequently, there exists a subsequence of $p^N$ that converges weakly in $H^1(\Omega)$ to some $p\in H^1(\Omega)$.
Since $c^N\to c$ strongly in $L^2(\Omega)$, we can pass to the limit in the distributional formulation of \eqref{auxPoissonN}
to obtain
\(  \label{p-distr}
   \int_\Omega (r+c)\grad p \cdot\grad\phi \d \mathbf{x} = \int_\Omega S\phi\d \mathbf{x}\qquad\mbox{for all } \phi\in C_0^\infty(\Omega).
\)
Noting that \eqref{auxEst1} also implies a uniform bound on $\int_\Omega c^N |\grad p^N|^2 \d \mathbf{x}$,
we have due to the weak lower semicontinuity of the $L^2$-norm,
\(  \label{aux:wlsc}
   \int_\Omega (r+c) |\grad p|^2 \d \mathbf{x} \leq \liminf_{N\to\infty} \int_\Omega (r+c^N) |\grad p^N|^2 \d \mathbf{x} < +\infty.
\)
Consequently, we can use $p$ as a test function in \eqref{p-distr} to obtain
\[
   \int_\Omega (r+c) |\grad p|^2 \d \mathbf{x} = \int_\Omega S p \d \mathbf{x}.
\]
Therefore, using $p^N$ as a test function in \eqref{auxPoissonN},
\[
   \lim_{N\to\infty} \int_\Omega (r+c^N) |\grad p^N|^2 \d \mathbf{x} =
   \lim_{N\to\infty} \int_\Omega S p^N \d \mathbf{x} = \int_\Omega S p \d \mathbf{x} =
   \int_\Omega (r+c) |\grad p|^2 \d \mathbf{x},
\]
which gives \eqref{aux:lim} and, further,
\[
   \limsup_{N\to\infty} \int_\Omega |\grad p^N|^2 \d \mathbf{x} &\leq&
   \limsup_{N\to\infty} \int_\Omega (r+c^N) |\grad p^N|^2 \d \mathbf{x} + \limsup_{N\to\infty} \left( - \int_\Omega c^N |\grad p^N|^2 \d \mathbf{x} \right) \\
   &=&
   \int_\Omega (r+c) |\grad p|^2 \d \mathbf{x} - \liminf_{N\to\infty} \int_\Omega c^N |\grad p^N|^2 \d \mathbf{x}.
\]
Now, using \eqref{aux:wlsc}, we have
\[
   - \liminf_{N\to\infty} \int_\Omega c^N |\grad p^N|^2 \d \mathbf{x} =
   - \liminf_{N\to\infty} \int_\Omega |\sqrt{c^N} \grad p^N|^2 \d \mathbf{x}
    \leq - \int_\Omega |\sqrt{c} \grad p|^2 \d \mathbf{x}.
\]
Therefore,
\[
   \limsup_{N\to\infty} \int_\Omega |\grad p^N|^2 \d \mathbf{x} \leq \int_\Omega |\grad p|^2 \d \mathbf{x}, 
\]
so that $\lim_{N\to\infty} \Norm{\grad p^N}_{L^2(\Omega)} = \Norm{\grad p}_{L^2(\Omega)}$,
which directly implies that (a subsequence of) $p^N$ converges towards $p$ strongly in $H^1(\Omega)$.
\end{proof}

\section{The 1D equidistant setting}\label{sec:1D}
In this section we consider the spatially one-dimensional setting of the discrete network formation problem,
where the graph $(\mathbb{V}, \mathbb{E})$ is given as a mesh on the interval $[0,1]$. Moreover,
for simplicity we consider the equidistant case, where for a fixed $N\in\N$
construct the sequence of meshpoints $x_i$,
\[
   x_i = ih \qquad\mbox{for } i=0,\dots, N, \mbox{ with } h:=1/N.
\]
We identify the meshpoints $x_i$ with the vertices of the graph, i.e.,
we set $\mathbb{V}:=\{x_i;\; i=0,\dots, N\}$.
The segments $(x_{i-1},x_i)$ connecting any two neighboring nodes
are identified with the edges of the graph, i.e., $\mathbb{E}:=\{ (x_{i-1},x_i); \; i=1,\dots, N\}$.
By a slight abuse of notation, we shall write $i\in \mathbb{V}$ instead of $x_i\in \mathbb{V}$ in the sequel,
and similarly $i\in \mathbb{E}$ instead of $(x_{i-1},x_i)\in \mathbb{E}$.
Moreover, we shall use the notation $C:=(C_i)_{i=1}^N$ with
$C_i\geq 0$ the conductivity of the edge $i\in \mathbb{E}$,
$P_i\in\R$ for the pressure in node $i\in \mathbb{V}$ and $S_i^N\in\R$ for the source/sink in node $i\in \mathbb{V}$ {with $\sum_{i=1}^N S_i^N=0$ by \eqref{assS}}.
With this notation we rewrite the energy functional \eqref{E1} as $E^N[C]: \R^N_+ \mapsto \R$,
\(  \label{EC1D}
   E^N[C] := h \sum_{i=1}^N \frac{Q_i^2}{r+C_i} + \frac{\nu}{\gamma} (r+C_{i})^\gamma,
\)
with the fluxes
\(  \label{QN1D}
   Q_i := (r+C_i) \frac{P_{i-1}-P_i}{h},\qquad\mbox{for } i=1,\dots,N.
\)
Note that we orient the fluxes $Q_i$ such that $Q_i>0$ if the material flows from $x_{i-1}$ to $x_i$.
The Kirchhoff law \eqref{Kirchhoff1} is then written in the form
\(  \label{Kirchhoff1D}
   (r+C_i) \frac{P_{i}-P_{i-1}}{h} + (r+C_{i+1}) \frac{P_{i}-P_{i+1}}{h} = h S_i^N \qquad\mbox{for } i=1,\dots,N-1,
\)
while for the terminal nodes we have
\[
   (r+C_1) \frac{P_0-P_1}{h} = h S_0^N,\qquad (r+C_N) \frac{P_{N}-P_{N-1}}{h} = h S_N^N  \,.
\]

Obviously, in the 1D setting the fluxes $Q_i$ are explicitly calculable from the given set of sources/sinks $(S_i)_{i=0}^N$
since the Kirchhoff law \eqref{Kirchhoff1D} is the chain of equations
\[
    Q_1 &=& h  S_0^N,\\
   -Q_i+Q_{i+1} &=& h S_i^N \quad\mbox{for }i=1,\dots, N-1,\\
   - Q_N &=& h S_N^N,
\]
which has the explicit solution
\(  \label{fluxes1D}
   Q_i = h \sum_{j=0}^{i-1} S_j^N\qquad\mbox{for } i=1,\dots,N-1.
\)
Note that due to the assumption of the global mass balance \eqref{assS} 
the ``terminal condition'' for $i=N$ is implicitly satisfied,
\(  \label{fluxes1DN}
   - Q_N = - h \sum_{j=0}^{N-1} S_j = h S_N^N.
\)
With the fluxes given by \eqref{fluxes1D}--\eqref{fluxes1DN}, it is trivial to find the global energy minimizer of \eqref{EC1D},
namely, $(r+C_i)^{\gamma+1} = Q_i^2 / \nu$.
It is also easy to prove that the sequence of the functionals \eqref{EC1D} converges as $h=1/N \to 0$
to the continuous functional
\(  \label{E-1D}
   \E[c] := \int_0^1 \frac{q(x)^2}{r+c(x)} + \frac{\nu}{\gamma} (r+c(x))^{\gamma} \d x, 
\)
with $q(x):=\int_0^x S(\sigma) \d\sigma$,
in the sense of Riemannian sums if $c$ is a continuous, nonnegative function.
Therefore, the limit passage to continuum description in the one-dimensional case is trivial.
However, we shall use it as a ``training example'' which avoids most of the technical difficulties
of the two-dimensional setting to gain a clear understanding of the main ideas of the method.

Therefore, we shall ignore the explicit formula \eqref{fluxes1D} for the fluxes $Q_i$
and study the limit as $h = 1/N\to 0$ of the sequence of energy functionals \eqref{EC1D}--\eqref{QN1D}, i.e.,
\(  \label{EN1D}
   E^N[C] = h \sum_{i=1}^N (r+C_i) \left(\frac{P_i-P_{i-1}}{h} \right)^2 + \frac{\nu}{\gamma} (r+C_{i})^\gamma,
\)
where the pressures $P_i$ are calculated as a solution of the Kirchhoff law \eqref{Kirchhoff1D}.
Note that since $r+C_i > 0$ for all $i\in \mathbb{V}$, \eqref{Kirchhoff1D} is solvable,
uniquely up to an additive constant.
In the following we shall show that the sequence \eqref{EN1D} converges,
as $h=\frac{1}{N} \to 0$, to the functional \eqref{E-1D} with $q:=(r+c)\partial_x p[c]$, i.e.,
\(  \label{Ec1D}
   \E[c] = \int_0^1 (r+c)(\partial_x p[c])^2 + \frac{\nu}{\gamma} (r+c)^{\gamma} \d x,
\)
where $p[c]\in H^1(0,1)$ is a weak solution of the Poisson equation
\(   \label{Poisson1D}
   - \partial_x ((r+c)\partial_x p) = S
\)
on $(0,1)$, subject to no-flux boundary conditions.
{Here and in the sequel we fix the source/sink term $S\in L^2(0,1)$ and,
in agreement with \eqref{assS}, we assume
the global mass balance $\int_0^1 S(x) \d x = 0$.}
Since for $c(x)\geq 0$ the weak solution $p=p(x)$ of \eqref{Poisson1D} is unique up to an additive constant,
we shall, without loss of generality, always choose the zero-average solution, i.e., $\int_0^1 p(x) \d x = 0$.

We shall proceed in several steps: First, we put the discrete energy functionals \eqref{EN1D}
into an integral form, and find an equivalence between solutions of the Kirchhoff law \eqref{Kirchhoff1D}
and the above Poisson equation with appropriate conductivity.
Then we show the convergence of the sequence of reformulated discrete energy functionals
towards a continuum one as $h=1/N\to 0$.
Finally, we introduce a diffusive term into the energy functional, which will allow us to
construct global minimizers of the continuum energy functional.

\subsection{Reformulation of the discrete energy functional}\label{subsec:Reform1D}
In the first step we reformulate the energy functionals \eqref{EN1D}
such that they are defined on the space $L_+^\infty(0,1)$
of essentially bounded nonnegative functions on $(0,1)$.
For this purpose, we define the sequence of operators
$\Q_0^N: \R^N \to L^\infty(0,1)$ by
\[
   \Q_0^N: (C_i)_{i=1}^N \mapsto c,\qquad\mbox{with } c(x) \equiv C_i \mbox{ for } x \in (x_{i-1},x_i),\; i=1,\dots,N.
\]
I.e., $\Q_0^N$ maps the sequence $(C_i)_{i=1}^N$ onto the bounded function $c=c(x)$,
constant on each interval $(x_{i-1},x_i)$, $i=1,\dots,N$.
Then, we define the functionals $\E^N: L_+^\infty(0,1) \mapsto \R$,
\(  \label{barEN1D}
   \E^N[c] := \int_0^1 (r+c) \left(\Q_0^N[\Delta^h P]\right)^2 + \frac{\nu}{\gamma} (r+c)^{\gamma} \d x,
\)
with
\(  \label{DeltaP}
   (\Delta^h P)_i := \frac{P_{i}-P_{i-1}}{h},\quad i = 1,\dots, N,
\)
and $P = (P_i)_{i=0}^N$ a solution of the Kirchhoff law \eqref{Kirchhoff1D} with the conductivities $C = (C_i)_{i=1}^N$,
\[
   C_i := \frac{1}{h} \int_{x_{i-1}}^{x_i} c(x) \d x,\quad i = 1,\dots, N.
\]
Then, noting that for each $C=(C_i)_{i=1}^N \in\R_+^N$,
\[
   \frac{1}{h} \int_{x_{i-1}}^{x_i} \Q_0^N[C](x) \d x = C_i\qquad\mbox{for all } i=1,\dots, N,
\]
the discrete energy functional \eqref{EN1D} can be written in the integral form as
$E^N[C] = \E^N[\Q_0^N[C]]$. 

Moreover, we establish a connection between the solutions of the Kirchhoff law \eqref{Kirchhoff1D}
and weak solutions of the Poisson equation \eqref{Poisson1D} with $c = \Q_0^N[C]$:
{
\begin{lemma}
 For any $C=(C_i)_{i=1}^N\in \R^N_+$ and $S\in L^2(0,1)$ with $\int_0^1 S(x) \d x=0$, let  $p=p(x)\in H^1(0,1)$ be a
 weak solution of the Poisson equation \eqref{Poisson1D} with $c = \Q_0^N[C]$, i.e.,
\(  \label{PoissonQ1D}
- \partial_x \left( (r+ \Q_0^N[C])\partial_x p \right) = S,
\)
subject to no-flux boundary conditions on $(0,1)$.  Then,
\(  \label{P_i}
P_i := p(x_i),\qquad i = 0,\dots,N,
\)
is a solution of the Kirchhoff law \eqref{Kirchhoff1D} with the conductivities $C = (C_i)_{i=1}^N$
and the source/sink terms
\(  \label{Sdiscrete}
   S_i^N := \frac{1}{h} \int_0^1 S(x) \phi_i^N(x) \d x, \qquad i=0,\dots,N,
\)
with the hat functions $\phi_i^N = \phi_i^N(x)$ defined in \eqref{phiN} below.
\end{lemma}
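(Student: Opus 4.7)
The plan is to test the weak form of the Poisson equation \eqref{PoissonQ1D} against the standard P1 finite element hat functions $\phi_i^N$ and read off the Kirchhoff law. Since in one dimension $H^1(0,1)\embdd C([0,1])$, the pointwise values $P_i := p(x_i)$ are well defined, so the statement makes sense.

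First I would record that $p\in H^1(0,1)$ satisfies
\[
   \int_0^1 (r+\Q_0^N[C])\,\partial_x p\,\partial_x\phi\d x = \int_0^1 S\phi\d x
   \qquad\text{for all }\phi\in H^1(0,1),
\]
and that $\Q_0^N[C]\equiv C_i$ is constant on $(x_{i-1},x_i)$. Next, I would introduce the hat functions $\phi_i^N$ with $\phi_i^N(x_j)=\delta_{ij}$, supported on $[x_{i-1},x_{i+1}]$, whose derivative equals $1/h$ on $(x_{i-1},x_i)$ and $-1/h$ on $(x_i,x_{i+1})$, with the obvious one-sided modifications at $i=0$ and $i=N$.

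The key step is then a direct substitution of $\phi=\phi_i^N$ into the weak formulation. On the left-hand side, the piecewise-constant character of both $\Q_0^N[C]$ and $\partial_x\phi_i^N$ lets us pull $(r+C_i)/h$ and $-(r+C_{i+1})/h$ out of the two relevant integrals. The remaining factors $\int_{x_{i-1}}^{x_i}\partial_x p\d x$ and $\int_{x_i}^{x_{i+1}}\partial_x p\d x$ collapse, via the fundamental theorem of calculus (applicable for $H^1$ functions in 1D), to $P_i-P_{i-1}$ and $P_{i+1}-P_i$ respectively. The right-hand side is, by definition \eqref{Sdiscrete}, exactly $hS_i^N$. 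Rearranging yields the Kirchhoff law \eqref{Kirchhoff1D} for $i=1,\dots,N-1$. The analogous computation using the one-sided hat functions $\phi_0^N$ and $\phi_N^N$ delivers the terminal equations, with the no-flux boundary condition ensuring no extra boundary contributions appear.

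No serious obstacle is expected; the lemma is a linear identity and the proof is essentially a bookkeeping exercise once the correct test function is chosen. The only minor care needed is to justify the pointwise identification $P_i=p(x_i)$ (via the continuous embedding of $H^1(0,1)$) and to handle the endpoints $i=0$, $i=N$ consistently with the no-flux boundary conditions, so that the chosen test function $\phi_i^N$ lies in the space for which the weak formulation is valid.
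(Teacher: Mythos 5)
Your proposal is correct and follows essentially the same route as the paper: test the weak formulation of \eqref{PoissonQ1D} against the piecewise linear hat functions $\phi_i^N$, exploit the piecewise constancy of $\Q_0^N[C]$ and $\partial_x\phi_i^N$ together with the fundamental theorem of calculus to recover the discrete differences $P_i-P_{i-1}$, and identify the right-hand side with $hS_i^N$ via \eqref{Sdiscrete}, using $H^1(0,1)\embdd C([0,1])$ to make sense of $P_i=p(x_i)$. Your explicit treatment of the one-sided hat functions at $i=0$ and $i=N$ is a small additional piece of bookkeeping that the paper leaves implicit, but it is the same argument.
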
}

{
\begin{proof}
Note that for any $C\in\R^N_+$ there exists a weak solution $p=p(x)\in H^1(0,1)$ of \eqref{PoissonQ1D},
unique up to an additive constant.
For $i=1,\dots,N$ we construct the family of piecewise linear test functions $\phi_i^N$,
supported on $(x_{i-1},x_{i+1})$, with
\(  \label{phiN}
   \phi_i^N(x) =\begin{cases} 1 + \frac{x-x_i}{h}\qquad\mbox{for } x\in(x_{i-1},x_i),\\
     1 - \frac{x-x_i}{h}\qquad\mbox{for } x\in(x_i,x_{i+1}).\end{cases}.
\)
Using the hat function $\phi_i^N$ as a test function in \eqref{PoissonQ1D}, we obtain
\[
   (r+C_i)\frac{p(x_{i})-p(x_{i-1})}{h} + (r+C_{i+1})\frac{p(x_{i})-p(x_{i+1})}{h} = h S_i^N,
\]
where we used the fact that, by construction, $\Q_0^N[C] \equiv C_i$
on the interval $(x_{i-1},x_i)$.
Note that due to the embedding $H^1(0,1) \embdd C(0,1)$ any weak solution $p=p(x)$ of \eqref{PoissonQ1D} is a continuous function on $[0,1]$,
so the pointwise values $p(x_i)$ are well defined for all $i=0,\dots,N$.
Thus, defining  $P_i$ as in \eqref{P_i} 
we obtain a solution of the Kirchhoff law \eqref{Kirchhoff1D} with the conductivities $C = (C_i)_{i=1}^N$
and source/sink terms \eqref{Sdiscrete}.
\end{proof}}

Note that since $\frac1h \int_0^1 \phi_i^N(x) \d x = 1$ and $S\in L^2(0,1)$, the Lebesgue differentiation theorem gives
\[
    S_i^N = \frac{1}{h} \int_0^1 S(x) \phi_i^N(x) \d x \to S(\bar x) \qquad\mbox{for a.e. $\bar x=x_i$ as } h=1/N\to 0.
\]

Consequently, for a fixed $S\in L^2(0,1)$ and any $N\in\N$, we have the following
reformulation of the discrete problem:

\begin{proposition}\label{prop:E1D}
For any vector $C = (C_i)_{i=1}^N \in\R^N_+$, we have
\[
   E^N[C] = \E^N[\Q_0^N[C]],
\]
where $E^N[C]$ is the discrete energy functional \eqref{EN1D} coupled to the Kirchhoff law \eqref{Kirchhoff1D}
with sources/sinks $S_i^N$ given by \eqref{Sdiscrete},
and $\E^N$ is the integral form \eqref{barEN1D}--\eqref{DeltaP} with the pressures given by $P_i = p(x_i)$, $i=0,\dots,N$,
where $p\in H^1(0,1)$ solves the Poisson equation \eqref{PoissonQ1D}.
\end{proposition}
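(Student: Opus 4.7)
The plan is to view the identity $E^N[C] = \E^N[\Q_0^N[C]]$ as an essentially tautological verification, once one uses the preceding lemma to match the pressures entering the two functionals. There are really only two ingredients: (a) the pressures $(P_i)$ appearing in $E^N[C]$ via the discrete Kirchhoff law coincide with the nodal values of the weak solution $p$ of \eqref{PoissonQ1D}, and (b) the integrands in $\E^N$ are piecewise constant on the mesh cells, so that the integral collapses to the finite sum defining $E^N$.

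First, I would set $c := \Q_0^N[C]$, so by definition of $\Q_0^N$ the function $c$ is constant equal to $C_i$ on each cell $(x_{i-1},x_i)$. In particular, the cell-average reconstruction from $c$ returns the original vector, i.e.\ $\frac{1}{h}\int_{x_{i-1}}^{x_i} c(x)\,\d x = C_i$ for $i=1,\dots,N$, so the conductivity vector used inside $\E^N[c]$ is exactly $(C_i)_{i=1}^N$.

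Next, I would invoke the preceding lemma with this $c$ and the given $S\in L^2(0,1)$ (which satisfies the global mass balance). The weak solution $p\in H^1(0,1)$ of \eqref{PoissonQ1D} is continuous on $[0,1]$ via the embedding $H^1(0,1)\embdd C[0,1]$, so the nodal values $P_i:=p(x_i)$ are well defined, and the lemma asserts precisely that these $P_i$ solve the Kirchhoff system \eqref{Kirchhoff1D} with conductivities $(C_i)$ and sources $S_i^N$ given by \eqref{Sdiscrete}. Since the Kirchhoff law determines $(P_i)$ uniquely up to an additive constant (recall $r+C_i>0$) and the energy \eqref{EN1D} depends only on the pressure differences $P_i-P_{i-1}$, the vector of finite differences $\Delta^h P$ used inside $\E^N[c]$ agrees with the one used inside $E^N[C]$.

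Finally, I would perform the trivial collapse from integral to sum. By construction, on the cell $(x_{i-1},x_i)$ we have $c(x)\equiv C_i$ and $\Q_0^N[\Delta^h P](x)\equiv (P_i-P_{i-1})/h$, hence the integrand of \eqref{barEN1D} is piecewise constant, and
\[
\E^N[c] = \sum_{i=1}^N \int_{x_{i-1}}^{x_i} \left[(r+C_i)\Bigl(\tfrac{P_i-P_{i-1}}{h}\Bigr)^2 + \tfrac{\nu}{\gamma}(r+C_i)^\gamma\right]\d x = h\sum_{i=1}^N \left[(r+C_i)\Bigl(\tfrac{P_i-P_{i-1}}{h}\Bigr)^2 + \tfrac{\nu}{\gamma}(r+C_i)^\gamma\right],
\]
which is exactly $E^N[C]$ in the form \eqref{EN1D}--\eqref{QN1D}. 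There is no real obstacle here: all the substance has already been packed into the preceding lemma (uniqueness/consistency of discrete vs.\ continuous pressures, the $H^1\embdd C$ trace at the nodes, and the Lebesgue-type identification of $S_i^N$), and the remaining step is just the observation that a piecewise-constant integrand integrates to a Riemann sum.
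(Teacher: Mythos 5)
Your proposal is correct and follows essentially the same route as the paper: the paper likewise treats the proposition as an immediate consequence of the preceding lemma (nodal values of the weak solution of \eqref{PoissonQ1D} solve the Kirchhoff law with sources \eqref{Sdiscrete}) combined with the observation that the cell averages of $\Q_0^N[C]$ return $C_i$ and that the piecewise-constant integrand in \eqref{barEN1D} collapses to the Riemann sum \eqref{EN1D}. Your added remark that the energy depends only on pressure differences, which are determined uniquely since the Kirchhoff solution is unique up to an additive constant, is a point the paper leaves implicit.
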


\subsection{Convergence of the energy functionals}\label{subsec:conv1D}
Due to Proposition \ref{prop:E1D}, we are motivated to prove the convergence of the sequence of functionals $\E^N$
given by \eqref{barEN1D}--\eqref{DeltaP} towards {$\E[c]$ given by \eqref{Ec1D}}
with $p[c]\in H^1(0,1)$ a weak solution of \eqref{Poisson1D} with conductivity $c=c(x)$,
equipped with no-flux boundary conditions.
We choose to work in the space of essentially bounded functions on $(0,1)$
equipped with the topology of $L^2(0,1)$.
The choice of topology is motivated by the need for strong convergence
of piecewise constant approximations of bounded functions.
Of course, this is true in $L^q(0,1)$ with any $q<+\infty$; our particular choice of $L^2(0,1)$
is further dictated by the fact that we shall apply Lemma \ref{lem:weak-strong} in the sequel.

\begin{lemma}\label{lem:conv1D}
Let $\gamma \geq 0$.
For any sequence of nonnegative functions $(c^N)_{N\in\N}$,
uniformly bounded in $L^\infty(0,1)$ and such that $c^N \to c$ in the norm topology of $L^2(0,1)$ as $N\to\infty$,
we have
\[
   \E^N[c^N] \to \E[c] \qquad\mbox{as } h = 1/N\to 0.
\]
\end{lemma}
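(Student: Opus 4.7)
The plan is to apply Lemma \ref{lem:weak-strong} to the sequence of piecewise-constant cell-averaged conductivities, and then to match the remaining finite-difference pressure gradient against the actual Poisson gradient via an orthogonal projection. Denote by $\Pi^N : L^2(0,1) \to L^2(0,1)$ the $L^2$-orthogonal projection onto functions constant on each subinterval $(x_{i-1},x_i)$, and set $c^N_{\mathrm{pw}} := \Pi^N c^N = \Q_0^N[C^N]$ where $C^N_i = \frac{1}{h}\int_{x_{i-1}}^{x_i} c^N\,\d x$. By Proposition \ref{prop:E1D}, $\E^N[c^N]$ is computed from the pressure $p^N\in H^1(0,1)$ that weakly solves $-\partial_x((r+c^N_{\mathrm{pw}})\partial_x p^N) = S$ with no-flux boundary data, and my key preliminary observation is that
\[
\Q_0^N[\Delta^h P^N] = \Pi^N(\partial_x p^N) =: u^N,
\]
since $(\Delta^h P^N)_i$ equals exactly the cell average of $\partial_x p^N$ on $(x_{i-1},x_i)$.

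To invoke Lemma \ref{lem:weak-strong} with $(c^N_{\mathrm{pw}})$, I will first verify $c^N_{\mathrm{pw}} \to c$ in $L^2(0,1)$ via
\[
\Norm{c^N_{\mathrm{pw}} - c}_{L^2} \leq \Norm{\Pi^N(c^N - c)}_{L^2} + \Norm{\Pi^N c - c}_{L^2} \leq \Norm{c^N - c}_{L^2} + \Norm{\Pi^N c - c}_{L^2},
\]
which vanishes by $L^2$-contractivity of $\Pi^N$ and density of piecewise constants in $L^2$. The lemma then delivers $\partial_x p^N \to \partial_x p$ and $\sqrt{c^N_{\mathrm{pw}}}\,\partial_x p^N \to \sqrt{c}\,\partial_x p$ strongly in $L^2(0,1)$, with $p$ the zero-average weak solution of \eqref{Poisson1D}.

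The central step is to convert these Poisson-gradient convergences into convergences for $u^N$ inside the pumping integrand. Because $\sqrt{c^N_{\mathrm{pw}}}$ is piecewise constant on the mesh, it commutes with $\Pi^N$, giving $\Pi^N(\sqrt{c^N_{\mathrm{pw}}}\,\partial_x p^N) = \sqrt{c^N_{\mathrm{pw}}}\,u^N$; combined with the contraction estimate $\Norm{\Pi^N v^N - v}_{L^2} \leq \Norm{v^N - v}_{L^2} + \Norm{\Pi^N v - v}_{L^2}$, this upgrades the previous limits to $u^N \to \partial_x p$ and $\sqrt{c^N_{\mathrm{pw}}}\,u^N \to \sqrt{c}\,\partial_x p$ strongly in $L^2$. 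Next, since $u^N$ is mesh-piecewise constant, the cellwise swap
\[
\int_{x_{i-1}}^{x_i} c^N (u^N)^2 \,\d x = \bigl(u^N|_{(x_{i-1},x_i)}\bigr)^2 \int_{x_{i-1}}^{x_i} c^N\,\d x = \int_{x_{i-1}}^{x_i} c^N_{\mathrm{pw}} (u^N)^2 \,\d x
\]
replaces $c^N$ by $c^N_{\mathrm{pw}}$ in the pumping integrand, yielding $\int_0^1 (r+c^N)(u^N)^2\,\d x \to \int_0^1 (r+c)(\partial_x p)^2\,\d x$.

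For the metabolic term, the uniform $L^\infty$-bound $\Norm{c^N}_{L^\infty} \leq M$ together with $r>0$ makes $t \mapsto (r+t)^\gamma$ Lipschitz on $[0,M]$, so $L^2$-convergence (hence $L^1$-convergence on the bounded interval) of $c^N$ implies $(r+c^N)^\gamma \to (r+c)^\gamma$ in $L^1(0,1)$. The two parts combine to give $\E^N[c^N] \to \E[c]$. The principal obstacle is the pumping term, where $c^N$ need not converge pointwise and $u^N$ is only piecewise constant; this obstruction is bypassed by the triple of devices (i) the projection identity $u^N = \Pi^N(\partial_x p^N)$, (ii) the cellwise $c^N \leftrightarrow c^N_{\mathrm{pw}}$ swap enabled by $u^N$ being piecewise constant, and (iii) Lemma \ref{lem:weak-strong} supplying the strong $L^2$-convergence of the Poisson gradients.
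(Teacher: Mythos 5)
Your proof is correct, and for the pumping term it takes a genuinely more direct route than the paper's. The paper also funnels everything through Lemma \ref{lem:weak-strong}, but it establishes the strong $L^2$-convergence of $\Q_0^N[\Delta^h P]$ in three stages: a uniform bound via Jensen's inequality, identification of the weak limit by testing against $\psi\in C_0^\infty(0,1)$ with a summation-by-parts and Taylor-expansion computation (plus the compact embedding $H^1(0,1)\embdd C(0,1)$ to get uniform convergence of $p^N$), and only then an upgrade to strong convergence using the norm convergence of $\partial_x p^N$. Your observation that $\Q_0^N[\Delta^h P]=\Pi^N(\partial_x p^N)$ is exactly the $L^2$-orthogonal projection of the true gradient collapses all three stages into the single estimate $\Norm{\Pi^N \partial_x p^N-\partial_x p}_{L^2}\le\Norm{\partial_x p^N-\partial_x p}_{L^2}+\Norm{\Pi^N\partial_x p-\partial_x p}_{L^2}$; the paper's Jensen step is precisely the contractivity of $\Pi^N$, but the paper does not exploit it further. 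Likewise, your treatment of the product $(r+c^N)(u^N)^2$ --- the cellwise swap of $c^N$ for $\Pi^N c^N$ followed by the $\sqrt{c^N}\grad p^N$ conclusion of Lemma \ref{lem:weak-strong} --- replaces the paper's pairing of weak-$\ast$ $L^\infty$ convergence of $c^N$ against strong $L^1$ convergence of $(u^N)^2$, and has the small additional merit of being faithful to the definition \eqref{barEN1D}, where the pressure solves the Poisson equation with the piecewise-constant conductivity $\Q_0^N[C]$ rather than with $c^N$ itself (a point the paper glosses over, harmlessly, since both converge to $c$). Your Lipschitz argument for the metabolic term, valid for all $\gamma\ge0$ because $r>0$ keeps $(r+t)^\gamma$ Lipschitz on $[0,M]$, is a clean substitute for the paper's dominated-convergence argument. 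No gaps.
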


\begin{proof}
By assumption, $c^N \to c$ in the norm topology of $L^2(0,1)$.
Consequently, there is a subsequence converging almost everywhere on $(0,1)$ to $c$,
and thus $\left(r+ c^N(x) \right)^\gamma$ converges almost everywhere to $(r+c(x))^\gamma$.
Since, by assumption, the sequence $\left(r + c^N(x) \right)^\gamma$ is uniformly bounded in $L^\infty(0,1)$,
we have by the dominated convergence theorem
\[
   \int_0^1 \left(r+ c^N(x) \right)^\gamma \d x \to \int_0^1 (r+ c(x))^\gamma \d x\qquad\mbox{as } h=1/N\to 0.
\]
We recall that the pumping part of the discrete energy $\E^N[c^N]$ \eqref{barEN1D} is of the form
\(   \label{pumping1D}
    \int_0^1 (r+c^N) \left(\Q_0^N[\Delta^h p^N] \right)^2 \d x,
\)
with 
\[
   (\Delta^h p^N)_i := \frac{p^N(x_i)-p^N(x_{i-1})}{h},\quad i = 1,\dots, N,
\]
where $p^N\in H^1(0,1)$ is a solution of the Poisson equation \eqref{Poisson1D} with conductivity $c^N$,
subject to the no-flux boundary condition.
Let us show that (a subsequence of) 
$\Q_0^N[\Delta^h p^N]$ converges to $\partial_x p[c]$ strongly in $L^2(0,1)$.
We proceed in three steps:
\begin{itemize}
\item
\textbf{Weak convergence.}
By Jensen inequality we have
\(
   \label{Jensen}
   \Norm{\Q_0^N[\Delta^h p^N]}^2_{L^2(0,1)} &=& h \sum_{i=1}^N \left( \frac{p^N(x_i)-p^N(x_{i-1})}{h} \right)^2 \\
   \nonumber
      &=& h \sum_{i=1}^N \left( \frac1h \int_{x_{i-1}}^{x_i} \partial_x p^N(x) \d x \right)^2 
       \leq \int_0^1 (\partial_x p^N)^2 \d x.
\)
Due to the nonnegativity of the functions $c^N$, the right-hand side is uniformly bounded.
Consequently, there exists a weakly converging subsequence of $\Q_0^N[\Delta p^N]$ in $L^2(0,1)$.
\item
\textbf{Identification of the limit.}
For a smooth, compactly supported test function $\psi\in C^\infty_0(0,1)$ we write
\[
   \int_0^1 \Q_0^N[\Delta^h p^N](x) \psi(x) \d x &=& \sum_{i=1}^N \frac{p^N(x_i)-p^N(x_{i-1})}{h} \int_{x_{i-1}}^{x_i} \psi(x) \d x \\
     &=& \frac1h \sum_{i=1}^{N-1} p^N(x_i) \left( \int_{x_{i-1}}^{x_i} \psi(x) \d x - \int_{x_i}^{x_{i+1}} \psi(x) \d x \right)
       + \mbox{``boundary terms''},
\]
where ``boundary terms'' are the two terms with $i=0$ and $i=N$, which we however can neglect for large enough $N$
since $\psi$ has a compact support.

{
Then, Taylor expansion for $\psi$ gives
\[
    \int_{x_{i-1}}^{x_i} \psi(x) \d x - \int_{x_i}^{x_{i+1}} \psi(x) \d x = - h \int_{x_{i-1}}^{x_i}\partial_x \psi(x) \d x
       + \frac{h^2}{2} \int_{x_{i-1}}^{x_i}\partial^2_{xx} \psi(\xi(x)) \d x,
\]
with $\xi(x) \in (x_{i-1}, x_i)$. Due to the estimate
\[
      \left| \frac{h^2}{2} \int_{x_{i-1}}^{x_i}\partial^2_{xx} \psi(\xi(x)) \d x \right| \leq \frac{h^3}{2}\Norm{\partial^2_{xx} \psi}_{L^\infty(0,1)} 
\]
we have
\[
    \int_{x_{i-1}}^{x_i} \psi(x) \d x - \int_{x_i}^{x_{i+1}} \psi(x) \d x = - h \int_{x_{i-1}}^{x_i}\partial_x \psi(x) \d x + \mathcal{O}(h^3),
\]
}
so that
\[
   \int_0^1 \Q_0^N[\Delta^h p^N](x) \psi(x) \d x &=&  - \int_0^1 \overline{p^N} \partial_x \psi(x) \d x + \mathcal{O}(h),
\]
where $\overline{p^N}$ is the piecewise constant function
\[
   \overline{p^N}(x) \equiv p^N(x_i) \quad\mbox{for } x\in(x_{i-1},x_i],\, i=1,\dots, N.
\]
It is easy to check that, due to the strong convergence of $c^N$ towards $c$ in $L^2(0,1)$,
$p^N$ converges to $p[c]$ weakly in $H^1(0,1)$.
Due to the compact embedding $H^1(0,1) \embdd C(0,1)$, (a subsequence of) $p^N$ converges uniformly
to $p[c]$ on $(0,1)$, and, therefore $\overline{p^N}$ converges strongly to $p[c]$.
Therefore,
\[
    \int_0^1 \Q_0^N[\Delta^h p^N](x) \psi(x) \d x &\to& - \int_0^1 p(x) \partial_x \psi(x) \d x \qquad\mbox{as } h=1/N\to 0,\\
    &=& \int_0^1 \psi(x) \partial_x p(x) \d x.
\]
We conclude that weak limit of (the subsequence of) $\Q_0^N[\Delta^h p^N]$ is $\partial_x p[c]$.
\item
\textbf{Strong convergence.}
Finally, due to \eqref{Jensen}, we have
\[
   \Norm{\Q_0^N[\Delta^h p^N] - \partial_x p[c]}^2_{L^2(0,1)} &=&
      \Norm{\Q_0^N[\Delta^h p^N]}^2_{L^2(0,1)} - 2 \langle \Q_0^N[\Delta^h p^N], \partial_x p[c] \rangle_{L^2(0,1)} + \Norm{\partial_x p[c]}^2_{L^2(0,1)} \\
      &\leq& \Norm{\partial_x p^N}^2_{L^2(0,1)} - 2 \langle \Q_0^N[\Delta^h p^N], \partial_x p[c] \rangle_{L^2(0,1)} + \Norm{\partial_x p[c]}^2_{L^2(0,1)},
\]
which vanishes in the limit $h=1/N\to 0$ due to the weak convergence of $\Q_0^N[\Delta p^N]$
and strong convergence of $\partial_x p^N$ in $L^2(0,1)$ due to Lemma \ref{lem:weak-strong}.
Thus, $\Q_0^N[\Delta p^N]$ converges strongly to $\partial_x p[c]$ in $L^2(0,1)$.
\end{itemize}
We conclude that due to the weak-$\ast$ convergence of $(r+c^N)$ towards $(r+c)$ in $L^\infty(0,1)$,
and strong convergence of $\left(\Q_0^N[\Delta^h p^N] \right)^2$ towards $\left(\partial_x p[c] \right)^2$ in $L^1(0,1)$,
we can pass to the limit as $h=1/N\to 0$ in \eqref{pumping1D} to obtain
\[
    \int_0^1 (r+c) \left(\partial_x p[c] \right)^2 \d x.
\]
\end{proof}

\subsection{Introduction of diffusion and construction of continuum energy minimizers}\label{subsec:1Ddiff}
In Lemma \ref{lem:conv1D} we proved the convergence of the sequence of energy functionals
$\E^N$ towards $\E$, i.e., for any $c^N \to c$ in the norm topology of $L^2(0,1)$,
we have $ \E^N[c^N] \to \E[c]$ as $N\to\infty$.
In order to construct energy minimizers of $\E$ as limits of sequences of minimizers of the functionals $\E^N$,
we need to introduce a term into $\E^N$ that shall guarantee compactness of the sequence of discrete minimizers.
This is done, in agreement with \cite{HMP15, HMPS16, bookchapter}, by introducing a diffusive term into the discrete energy functional \eqref{EN1D},
modeling random fluctuations in the medium.
Thus, we construct the sequence $E^N_\mathrm{diff} : \R^N_+ \to \R$,
\(  \label{EN1DwD}
   E^N_\mathrm{diff}[C] := D^2 h \sum_{i=1}^{N-1} \left(\frac{C_{i+1}-C_i}{h}\right)^2 + E^N[C],
\)
with $E^N[C]$ defined in \eqref{EN1D}, coupled to the Kirchhoff law \eqref{Kirchhoff1D}
with sources/sinks $S_i^N$ given by \eqref{Sdiscrete},
and $D^2 > 0$ the diffusion constant.
Note that the new term is a discrete Laplacian acting on the conductivities $C$.

We now need to reformulate the discrete energy functionals \eqref{EN1DwD} in terms of integrals.
For this sake, we construct the sequence of operators $\Q^N_1: \R^N \to C(0,1)$,
where $\Q^N_1[C]$ is a continuous function on $[0,1]$, linear on each interval $(x_i-h/2,x_i+h/2)$, with
\[
      \Q^N_1[C](x_i - h/2) = C_i \quad\mbox{for } i=1,\dots,N,
\]
and
\[
   \Q^N_1[C](x) \equiv C_1 \quad\mbox{for } x\in[0,h/2),\qquad
   \Q^N_1[C](x) \equiv C_N \quad\mbox{for } x\in(1-h/2,1].
\]
Then we write the finite difference term in \eqref{EN1DwD} as
\[
   D^2 h \sum_{i=1}^{N-1} \left(\frac{C_{i+1}-C_i}{h}\right)^2 =
     D^2 \int_0^1 \left(\partial_x \Q^N_1[C] \right)^2 \d x,
\]
and we have

\begin{proposition}\label{prop:E1Ddiff}
For any vector $C = (C_i)_{i=1}^N \in\R^N_+$,
\[
    E^N_\mathrm{diff}[C] = D^2 \int_0^1 \left(\partial_x \Q^N_1[C] \right)^2 \d x + \E^N[\Q_0^N[C]] \,,
\]
where $E^N_\mathrm{diff}$ defined in \eqref{EN1DwD} and
$\E^N$ is given by \eqref{barEN1D}--\eqref{DeltaP} with the pressures given by $P_i = p(x_i)$, $i=0,\dots,N$,
where $p\in H^1(0,1)$ solves the Poisson equation \eqref{PoissonQ1D}.
\end{proposition}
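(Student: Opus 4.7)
The statement splits cleanly into two independent identities: one for the diffusive part and one for the rest of the energy. The plan is to verify each separately, then add.

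For the diffusive part, my goal is to show
\[
  D^2 h \sum_{i=1}^{N-1} \left( \frac{C_{i+1}-C_i}{h} \right)^2 = D^2 \int_0^1 \left( \partial_x \Q^N_1[C] \right)^2 \d x.
\]
I would unpack the definition of $\Q^N_1[C]$. The graph of $\Q^N_1[C]$ consists of two constant pieces on $[0,h/2]$ and $[1-h/2,1]$, which contribute nothing to the derivative, plus $N-1$ linear segments, one on each interval $(x_i-h/2, x_i+h/2)$ for $i=1,\dots,N-1$. On the $i$-th such segment the function interpolates between the values $C_i$ at the left endpoint and $C_{i+1}$ at the right endpoint (since $x_{i+1}-h/2 = x_i+h/2$), so its derivative is the constant $(C_{i+1}-C_i)/h$. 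Each segment has length $h$, so its contribution to $\int_0^1 (\partial_x \Q^N_1[C])^2 \d x$ is exactly $h \left( (C_{i+1}-C_i)/h \right)^2$. Summing over $i$ gives the identity.

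For the remaining terms, there is nothing new to do: by the definition \eqref{EN1DwD} of $E^N_\mathrm{diff}$, the non-diffusive part is precisely $E^N[C]$, and Proposition \ref{prop:E1D} already states
\[
  E^N[C] = \E^N[\Q_0^N[C]],
\]
with the pressures in $\E^N[\Q_0^N[C]]$ being the nodal traces $P_i = p(x_i)$ of the weak solution $p \in H^1(0,1)$ of \eqref{PoissonQ1D}. Adding the two identities yields the claim.

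There is no serious obstacle: the only piece of actual work is the piecewise-linear derivative computation for $\Q^N_1[C]$, which is a one-line check once the endpoint conventions (the constant extensions near $0$ and $1$, and the matching $\Q^N_1[C](x_i+h/2) = \Q^N_1[C](x_{i+1}-h/2) = C_{i+1}$) are written out. The identification of the pumping and metabolic parts is outsourced wholesale to Proposition \ref{prop:E1D}.
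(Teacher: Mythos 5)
Your proposal is correct and matches the paper's own treatment: the paper establishes the identity $D^2 h \sum_{i=1}^{N-1}((C_{i+1}-C_i)/h)^2 = D^2\int_0^1(\partial_x\Q_1^N[C])^2\,\mathrm{d}x$ by exactly the piecewise-linear slope computation you describe, and then the proposition follows by combining this with Proposition \ref{prop:E1D}. Nothing is missing.
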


\noindent
We are now ready to prove the main result of this section:

\begin{theorem}
Let $\gamma\geq 0$, {$S\in L^2(0,1)$ with $\int_0^1 S(x) \d x$ and $S_i^N$ given by \eqref{Sdiscrete}.}
Let $(C^N)_{N\in\N}$ be a sequence of global minimizers of the discrete energy functionals $E^N_\mathrm{diff}$ given by \eqref{EN1DwD}.
Then the sequence $\Q_1^N[C^N]$ converges weakly in $H^1(0,1)$ to $c\in H^1(0,1)$,
a global minimizer of the functional $\E_\mathrm{diff}: H^1_+(0,1) \to \R$,
\[
   \E_\mathrm{diff}[c] := D^2 \int_0^1 \left(\partial_x c \right)^2 \d x + \E[c],
\]
where $\E[c]$ is given by \eqref{Ec1D}.
\end{theorem}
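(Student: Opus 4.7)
The plan is to apply the direct method of the calculus of variations, using Proposition~\ref{prop:E1Ddiff} to recast the discrete functional in integral form, Lemma~\ref{lem:conv1D} to pass to the limit in the non-diffusive part, and weak-$H^1$ lower semicontinuity for the diffusive part.

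First I would produce uniform bounds on the minimizing sequence by comparing against the trivial competitor $C^N \equiv 0$: for it the diffusion term vanishes, the metabolic term equals the constant $\frac{\nu}{\gamma} r^\gamma$, and the pumping term is $\int_0^1 r (\Q_0^N[\Delta^h p^N])^2 \d x$ where $p^N$ approximates the $N$-independent weak solution of $-r\partial_{xx} p = S$ on $(0,1)$, so $E^N_\mathrm{diff}[0]\leq M$ uniformly in $N$. Minimality then forces $E^N_\mathrm{diff}[C^N]\leq M$, which gives uniform control of $\|\partial_x \Q_1^N[C^N]\|_{L^2(0,1)}$ from the diffusion term and of $\int_0^1 (r+\Q_0^N[C^N])^\gamma \d x$ from the metabolic term. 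Combining these with the nonnegativity of $C_i^N$ and a Poincar\'e--Wirtinger argument yields a uniform $H^1$-bound on $\Q_1^N[C^N]$; the one-dimensional embedding $H^1(0,1)\hookrightarrow L^\infty(0,1)$ then supplies the $L^\infty$-bound required to apply Lemma~\ref{lem:conv1D}.

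Equipped with these bounds, Rellich--Kondrachov produces a (non-relabelled) subsequence with $\Q_1^N[C^N]\rightharpoonup c$ in $H^1(0,1)$ and $\Q_1^N[C^N]\to c$ strongly in $L^2(0,1)$, with $c\geq 0$. The identity $\Q_1^N[C^N](x_i-h/2) = C_i^N =\Q_0^N[C^N](x)$ on $(x_{i-1},x_i)$ together with the uniform $\dot H^1$-bound gives $\|\Q_0^N[C^N]-\Q_1^N[C^N]\|_{L^2}=O(h)$, so also $\Q_0^N[C^N]\to c$ strongly in $L^2(0,1)$, and Lemma~\ref{lem:conv1D} yields $\E^N[\Q_0^N[C^N]]\to\E[c]$. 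Combined with weak-$L^2$ lower semicontinuity of $\|\partial_x\cdot\|_{L^2}^2$ this produces the $\liminf$ inequality $\E_\mathrm{diff}[c]\leq \liminf_N E^N_\mathrm{diff}[C^N]$. To identify $c$ as a global minimizer, I would construct a recovery sequence: for an arbitrary $\tilde c\in H^1_+(0,1)$, set $\tilde C_i^N:=\frac{1}{h}\int_{x_{i-1}}^{x_i}\tilde c\,\d x$; standard finite element approximation theory gives $\Q_1^N[\tilde C^N]\to\tilde c$ strongly in $H^1(0,1)$, so the diffusion term converges, and Lemma~\ref{lem:conv1D} again yields $\E^N[\Q_0^N[\tilde C^N]]\to\E[\tilde c]$. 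Minimality of $C^N$ then gives $\E_\mathrm{diff}[c]\leq\E_\mathrm{diff}[\tilde c]$ for every $\tilde c$.

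The delicate point I anticipate is the passage from boundedness of $\int_0^1(r+\Q_0^N[C^N])^\gamma \d x$ to a uniform $L^\infty$-bound on $\Q_1^N[C^N]$ in the regime $0\leq\gamma<1$, where the metabolic term provides only $L^\gamma$-type control. There one has to combine the $\dot H^1$-bound with nonnegativity of $c^N$ to rule out mean-value blow-up, or otherwise strengthen the comparison argument to capture $L^1$-coercivity, before Lemma~\ref{lem:conv1D} (which demands a uniform $L^\infty$-bound) can be invoked.
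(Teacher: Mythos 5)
Your proposal is correct and follows essentially the same route as the paper: comparison with the zero competitor for the uniform energy bound, $H^1$-compactness of $\Q_1^N[C^N]$ from the diffusion term, Lemma~\ref{lem:conv1D} plus weak lower semicontinuity for the liminf inequality, and cell averages of an arbitrary competitor as a recovery sequence (the paper phrases this last step as a proof by contradiction, which is the same argument). The ``delicate point'' you flag is real --- the paper simply asserts the uniform $L^\infty$ bound without comment --- but your suggested fix closes it for $\gamma>0$: the $\dot H^1$ bound controls the oscillation in $L^\infty$ via the embedding $H^1(0,1)\hookrightarrow L^\infty(0,1)$, so a blow-up of the mean together with nonnegativity would force $\int_0^1 (r+c^N)^\gamma \d x\to\infty$, contradicting the boundedness of the metabolic term.
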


\begin{proof}
Let us observe that
\[
   E^N_\mathrm{diff}[C^N] \leq E^N_\mathrm{diff}[0] =  {r}  h\sum_{i=1}^{N} \left(\frac{\widetilde P_i- \widetilde P_{i-1}}{h}\right)^2 + \frac{\nu}{\gamma}r^\gamma,
\]
where $(\widetilde P_i)_{i=1}^N$ is a solution of the Kirchhoff law \eqref{Kirchhoff1D} with zero conductivities and sources/sinks given by \eqref{Sdiscrete}.
Thus, $\widetilde P_i = \widetilde p(x_i)$ for $i=1,\dots,N$, where $\widetilde p = \widetilde p(x)$ is a weak solution of $- {r}\laplace p = S$ subject to no-flux boundary conditions.
Then we have by the Jensen inequality
\[
   D^2 h \sum_{i=1}^N \left(\frac{\widetilde P_i- \widetilde P_{i-1}}{h}\right)^2
     &=& D^2 h \sum_{i=1}^{N} \left( \frac{1}{h} \int_{x_{i-1}}^{x_i}  \partial_x \widetilde p \d x \right)^2 \\
     &\leq& D^2 \int_0^1 (\partial_x \widetilde p)^2 \d x.
\]
Consequently, the sequence $\E_\mathrm{diff}^N[C^N]$ is uniformly bounded.

Since the sequence
\[
     D^2 \int_0^1 \left(\partial_x \Q^N_1[C^N] \right)^2 \d x =
    D^2 h \sum_{i=1}^{N-1} \left(\frac{C_{i+1}-C_i}{h}\right)^2 \leq
    E^N_\mathrm{diff}[C^N] 
\]
is uniformly bounded, there exists a subsequence of $\Q^N_1[C^N]$ converging to some $c\in H^1(0,1)$ weakly in $H^1(0,1)$,
and strongly in $L^2(0,1)$; moreover, the sequence is uniformly bounded in $L^\infty(0,1)$.
It is easy to check that also $\Q^N_0[C^N]$ converges to $c$ strongly in $L^2(0,1)$, and is uniformly bounded in $L^\infty(0,1)$.
Therefore, by Lemma \ref{lem:conv1D}, we have ${E^N[C^N] }= \E^N[\Q_0^N[C^N]] \to \E[c]$ as $h=1/N\to 0$.
Moreover, due to the weak lower semicontinuity of the $L^2$-norm, we have
\[
   \int_0^1 \left(\partial_x c \right)^2 \d x \leq  \liminf_{N\to\infty} \int_0^1 \left(\partial_x \Q^N_1[C^N] \right)^2 \d x \,.
\]
Consequently,
\(  \label{gamma_conv_1D}
   \E_\mathrm{diff}[c] \leq  \liminf_{N\to\infty} E^N_\mathrm{diff}[C^N].
\)

We claim that $c$ is a global minimizer of $\E_\mathrm{diff}$ in $H^1_+(0,1)$.
For contradiction, assume that there exists $\bar c\in H^1_+(0,1)$ such that
\[
   \E_\mathrm{diff}[\bar c] < \E_\mathrm{diff}[c].
\]
We define the sequence $(\bar C^N)_{N\in\N}$ by
\[
   \bar C^N_i := \frac{1}{h} \int_{x_{i-1}}^{x_i} \bar c(x) \d x,\quad i = 1,\dots, N.
\]
Then, by assumption, we have for all $N\in\N$,
\(   \label{ass-contr}
   E^N_\mathrm{diff}[\bar C^N]  \geq E^N_\mathrm{diff}[C^N].
\)
It is easy to check that the sequence $\Q_1^N[\bar C^N]$ converges strongly in $H^1(0,1)$ 
towards $\bar c$, therefore
\[
   \int_0^1 \left(\partial_x \Q^N_1[\bar C^N] \right)^2 \d x \to \int_0^1 \left(\partial_x \bar c \right)^2 \d x \qquad\mbox{as } h=1/N\to 0.
\]
Moreover, the sequence $\Q_0^N[\bar C^N]$ converges to $\bar c$ strongly in $L^2(0,1)$, therefore,
by Lemma \ref{lem:conv1D}, $\E^N[\Q_0^N[\bar C^N]] \to \E[\bar c]$ as $h=1/N\to 0$.
Consequently,
\[
   \lim_{h=1/N\to 0} E^N_\mathrm{diff}[\bar C^N] = \E_\mathrm{diff}[\bar c] < \E_\mathrm{diff}[c],
\]
a contradiction to \eqref{gamma_conv_1D}--\eqref{ass-contr}.
\end{proof}


\def\x{\mathbf{x}}
\def\bfX{\mathbf{X}}
\def\X{\mathbb{X}}
\def\calX{\mathcal{X}}
\def\calU{\mathcal{U}}
\def\calT{\mathcal{T}}
\def\calI{\mathcal{I}}
\def\I{\mathbb{I}}
\def\Edges{\mathscr{E}}
\def\bbW{\mathbb{W}}

\section{The 2D rectangular equidistant setting}\label{sec:2D}
In this section we consider the spatially two-dimensional setting of the discrete network formation problem,
where the graph $(\mathbb{V}, \mathbb{E})$ is embedded in the rectangle $\Omega := [0,1]^2$.
We introduce the notation $\x:=(x,y)\in\Omega$.
For $N\in\N$ we construct the sequence of equidistant rectangular meshes
in $\Omega$ with mesh size $h:=1/N$ and mesh nodes $\bfX_i = (X_i, Y_i)$,
\[
   X_i = (i \mbox{ mod } {N+1})h,\quad Y_i = (i \mbox{ div } {N+1})h,\qquad\mbox{for } i=0,\dots, (N+1)^2-1, \mbox{ with } h:=1/N,
\]
where $(i \mbox{ div } {N+1})$ denotes the integer part of $i/({N+1})$ and $(i \mbox{ mod } {N+1})$ the remainder.
We identify the mesh nodes $\bfX_i = (X_i, Y_i)$ with the vertices of the graph,
i.e., we set $\mathbb{V}:=\{\bfX_i;\; i = 0,\dots, (N+1)^2-1\}$.
By a slight abuse of notation, we shall write $i\in \mathbb{V}$ instead of $X_i\in \mathbb{V}$ in the sequel.
For each node $\bfX_i$, we denote by $\bfX_{i,E}$, $\bfX_{i,W}$, $\bfX_{i,N}$, $\bfX_{i,S}$
its direct neighbors to the East, West, North and South, respectively (if they exist);
see Fig. \ref{fig:triangles}.
Then, the set $\mathbb{E}$ of edges of the graph is composed of the
horizontal and vertical segments connecting the neighboring nodes,
i.e., $(\bfX_{i}, \bfX_{i,\star})$ for $\star\in\{E,W,N,S\}$ and $i\in \mathbb{V}$.
We shall denote $C_i^\star$ the conductivity of the edge $(\bfX_i, \bfX_{i,\star})$,
and $P_i$, resp., $P_{i,\star}$, denote the pressure in the vertex $\bfX_i$, resp., $\bfX_{i,\star}$.
Similarly, $S_i^h$ denotes the source/sink in vertex $i\in \mathbb{V}$.


With this notation, the discrete energy functional \eqref{E1} takes the particular form
\(  \label{EN2D}
   E^h[C] = \frac{h^2}{2} \sum_{i\in \mathbb{V}} \sum_{\star\in\{E,W,N,S\}} (r+C_i^\star) \left( \frac{P_i - P_{i,\star}}{h} \right)^2 + \frac{\nu}{\gamma}(r+C_i^\star)^\gamma,
\)
and the Kirchhoff law \eqref{Kirchhoff1} is written as
\(  \label{Kirchhoff2D}
   \sum_{\star\in\{E,W,N,S\}} (r+C_i^\star) \frac{P_i - P_{i,\star}}{h} = {h S_i^h}.
   \qquad\mbox{for } i\in \mathbb{V},
\)
For reasons explained later, we shall restrict to the case $\gamma>1$ in the sequel.

Our strategy is to perform a program analogous to the 1D case of Section \ref{sec:1D}:
first, to put the discrete energy functionals \eqref{EN2D} into an integral form and find an equivalence
between solutions of the Kirchhoff law \eqref{Kirchhoff2D}
and the above Poisson equation with appropriate conductivity.
However, in the 2D case the situation is more complicated and we
need to introduce a finite element discretization of the Poisson equation.
We then establish a connection between the FE-discretization
and the Kirchhoff law \eqref{Kirchhoff2D}.
In the next step we show the convergence of the sequence of reformulated discrete energy functionals
towards a continuum one as $h = 1/N\to 0$, using standard results of the theory of finite elements.
Finally, we introduce a diffusive term into the energy functional, which will allow us to
construct global minimizers of the continuum energy functional.

\subsection{Finite element discretization of the Poisson equation}
\label{subsec:FEM-Poisson}
We construct a regular triangulation on the domain $\Omega$ such that each interior node $\bfX_i$
has six adjacent triangles,
$T_i^{NE}$, $T_i^N$, $T_i^{NW}$, $T_i^{SW}$, $T_i^S$, $T_i^{SE}$,
see Fig. \ref{fig:triangles}.
Boundary nodes have three, two or only one adjacent triangles, depending on their location.
The union of the triangles adjacent to each $\bfX_i$ is denoted by $U_i$.
The collection of all triangles constructed in $\Omega$ is denoted by $\calT^h$.

\begin{figure}
{\centering
\resizebox*{0.5\linewidth}{!}{\includegraphics{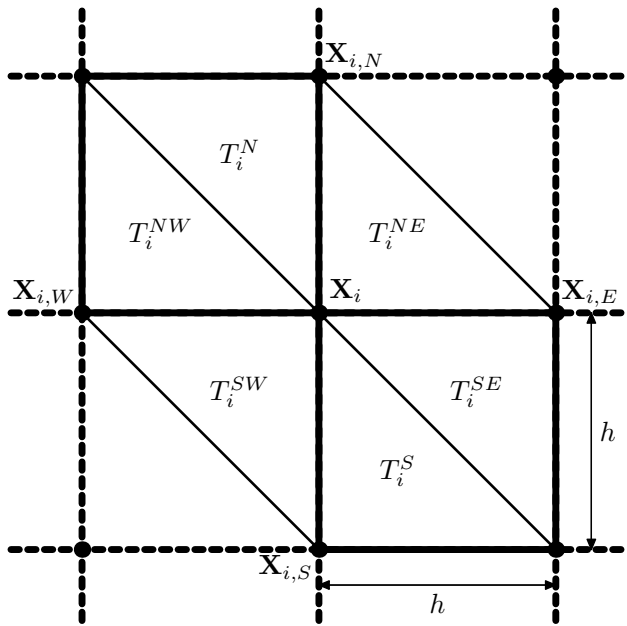}}
\par}
\caption{Interior node $\bfX_i$ with its four neighboring nodes $\bfX_{i,E}$, $\bfX_{i,W}$, $\bfX_{i,N}$, $\bfX_{i,S}$
and six adjacent triangles,
$T_i^{NE}$, $T_i^N$, $T_i^{NW}$, $T_i^{SW}$, $T_i^S$, $T_i^{SE}$.
\label{fig:triangles}}
\end{figure}

{We fix $S\in L^2(\Omega)$ with $\int_\Omega S\d \mathbf{x}=0$ and}
consider a discretization of the Poisson equation
\(  \label{PoissonForFEM}
   - \grad\cdot ((rI + c) \grad p) = S
\)
on $\Omega$ subject to the no-flux boundary conditions,
using the first-order (piecewise linear) $H^1$ finite element method on the triangulation $\calT^h$.
Therefore, on each $NE$-triangle $T_i^{NE}$ we construct the linear basis functions $\phi_{i;1}^{NE}$,
$\phi_{i;2}^{NE}$, $\phi_{i;3}^{NE}$ with
\[
   \phi_{i;1}^{NE}(\bfX_i) = 1, \quad \phi_{i;1}^{NE}(\bfX_{i,E}) = 0, \quad \phi_{i;1}^{NE}(\bfX_{i,N}) = 0,\\
   \phi_{i;2}^{NE}(\bfX_i) = 0, \quad \phi_{i;2}^{NE}(\bfX_{i,E}) = 1, \quad \phi_{i;2}^{NE}(\bfX_{i,N}) = 0,\\
   \phi_{i;3}^{NE}(\bfX_i) = 0, \quad \phi_{i;3}^{NE}(\bfX_{i,E}) = 0, \quad \phi_{i;3}^{NE}(\bfX_{i,N}) = 1,
\]
and analogously for the other triangles in $U_i$, {see Section \ref{subsec:App1} of the Appendix for explicit formulae.}
Denoting $W^h\subset H^1(\Omega)$ the space of continuous, piecewise linear functions on the triangulation $\calT^h$,
the finite element discretization of \eqref{PoissonForFEM} reads
\(  \label{PoissonFEM}
   \int_\Omega  \grad p^h \cdot (rI+c) \grad \psi^h \d \x = \int_\Omega S \psi^h \d \x \qquad\mbox{for all } \psi^h\in W^h.
\)
Using standard arguments (coercivity and continuity of the corresponding bilinear form) we construct
a solution $p^h\in W^h$ of \eqref{PoissonFEM}, unique up to an additive constant;
without loss of generality we fix $\int_\Omega p^h(\x) \d\x = 0$.
The solution is represented by its vertex values $P_i^h := p^h(\bfX_i)$, $i\in \mathbb{V}$.
In particular, on each $NE$-triangle $T_i^{NE}$ we have
\[
    p^h(\x) = P^h_i \phi_{i;1}^{NE}(\x) + P^h_{i,E} \phi_{i;2}^{NE}(\x) + P^h_{i,N} \phi_{i;3}^{NE}(\x), \qquad\x\in T_i^{NE},
\]
and the gradient of $p^h$ on $T_i^{NE}$ is the constant vector
\(  \label{grad_p_h}
   \grad p^h(\x) = \frac{1}{h} ( P^h_{i,E} - P^h_i, P^h_{i,N} - P^h_i), \qquad\x\in T_i^{NE}.
\)
Analogous formulae hold for all other triangles in $U_i$, {as explicitly listed in Section \ref{subsec:App2} of the Appendix.}

We now establish a connection between the discretized Poisson equation \eqref{PoissonFEM} and the Kirchhoff law \eqref{Kirchhoff2D}.
For this purpose, we define the sequence of operators $\Q_0^h$
mapping the vector of conductivities $(C_i)_{i\in \mathbb{E}}$
onto piecewise constant $2\times 2$ diagonal tensors,
\(  \label{Q0N_2D_0}
   \Q_0^h: (C_i)_{i\in \mathbb{E}} \mapsto
     \begin{pmatrix} c_1 & 0 \\ 0 & c_2 \end{pmatrix}.
\)
The functions $c_1=c_1(\x)$, $c_2=c_2(\x)$, defined on $\Omega$, are constant on each triangle $T\in\calT^h$
and $c_1$ takes the value of the conductivity of the horizontal edge of $T$
and $c_2$ takes the value of the conductivity of the vertical edge of $T$.

{
In particular, we have
\( \label{Q0N_2D_1}
  c_1 := \begin{cases}
     C_i^E\qquad\mbox{on } T_i^{NE}, \\
     C_i^E\qquad\mbox{on } T_i^{SE}, \\
     C_i^W\qquad\mbox{on } T_i^{SW}, \\
     C_i^W\qquad\mbox{on } T_i^{NW},
     \end{cases}
     \qquad
  c_2 := \begin{cases}
     C_i^N\qquad\mbox{on } T_i^{NE}, \\
     C_i^S\qquad\mbox{on } T_i^{S}, \\
     C_i^S\qquad\mbox{on } T_i^{SW}, \\
     C_i^N\qquad\mbox{on } T_i^{N}.
     \end{cases}
\)
}
Then, for a given vector of conductivities $C=(C_i)_{i\in \mathbb{E}}$ we consider
the discretized Poisson equation \eqref{PoissonFEM} with the conductivity tensor $c:=\Q_0^h[C]$.

{
For each $i\in \mathbb{V}$ we construct the test function $\psi^h_i$ as
\[
   \psi^h_i := \phi_{i;1}^{NE} + \phi_{i;1}^{SE} + \phi_{i;1}^{S} + \phi_{i;1}^{SW} + \phi_{i;1}^{NW} + \phi_{i;1}^{N},
\]
with the basis functions on the right-hand side defined in Section \ref{subsec:App1} of the Appendix.
Consequently, each $\psi^h_i$ is supported on $U_i$, linear on each triangle belonging to $U_i$, and
continuous on $\Omega$.
Then, obviously, $\psi_i^h\in W^h$ and using it as a test function in \eqref{PoissonFEM}, we calculate,
for the triangle $T_i^{NE}$,
\[
   \int_{T_i^{NE}} \grad p^h \cdot \left(r I + \Q_0^h[C]\right) \grad \psi^h_i \d \x = \frac{r+C_{i}^E}{2} \left( P_i^h - P^h_{i,E} \right) + \frac{r+C_{i}^N}{2} \left( P_i^h - P^h_{i,N} \right),
\]
where we used \eqref{grad_p_h}, the identity $\grad \psi^h_i \equiv - \frac{1}{h}(1,1)$ on $T_i^{NE}$,
and orthogonality relations between gradients of the basis functions
(for instance, $\grad \phi_{i;2}^{NE} \cdot \grad \phi_{i;3}^{NE} = 0$).
Performing analogous calculations for the remaining triangles constituting $U_i$, namely, $T_i^{SE}$, $T_i^{S}$, $T_i^{SW}$, $T_i^{NW}$ and $T_i^{N}$,
see Section \ref{subsec:App3} of the Appendix for explicit details, we obtain
\(  \label{CforA}
   \int_\Omega \grad p^h \cdot \left(r I+ \Q_0^h[C]\right) \grad \psi^h_i \d \x &=& \sum_{\star\in\{E,W,N,S\}} (r+C_i^\star) (P_i^h - P_{i,\star}^h).
\)
Consequently, \eqref{PoissonFEM} gives the identity
\[
   \sum_{\star\in\{E,W,N,S\}} (r+C_i^\star) \frac{P_i^h - P_{i,\star}^h}{h} = \frac{1}{h}\int_\Omega S \psi^h_i \d\x
\]
for all $i\in \mathbb{V}$.
}
Thus, defining
\(   \label{S_i^h}
   S_i^h := \frac{1}{h^2} \int_\Omega S \psi^h_i \d \x,
\)
we have the following result:

{
\begin{lemma}\label{lem:KP2D}
For any vector of nonnegative conductivities $C=(C_i)_{i\in \mathbb{E}}$ and $S\in L^2(\Omega)$ with $\int_\Omega S \d \mathbf{x}=0$,
let $p^h\in W^h$ be a solution of the finite element discretization \eqref{PoissonFEM}
with $c := \Q_0^h[C]$.
Then, $P_i^h := p^h(\bfX_i)$, $i\in \mathbb{V}$,
is a solution of the rescaled Kirchhoff law \eqref{Kirchhoff2D} with the source/sink terms $S_i^h$ given by \eqref{S_i^h}.
\end{lemma}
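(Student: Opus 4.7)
The statement has essentially been set up already in the paragraphs preceding it, so my proof plan is to assemble the calculation in a clean way and handle the remaining bookkeeping (interior vs.\ boundary nodes, well-posedness of $p^h$).

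The plan is to take $\psi^h_i$ as a test function in \eqref{PoissonFEM} and verify that the resulting identity is precisely the rescaled Kirchhoff law. First, I note that existence of $p^h\in W^h$, unique up to an additive constant, follows from the standard Lax--Milgram argument in $W^h / \R$: the bilinear form
\[
   a(u,v) := \int_\Omega \grad u \cdot (rI + \Q_0^h[C]) \grad v \d \x
\]
is continuous and, since $C\geq 0$ and $r>0$, coercive on $W^h/\R$ via the discrete Poincar\'e inequality, while the right-hand side $v\mapsto \int_\Omega S v \d\x$ is a bounded linear functional on $W^h$ (and the compatibility condition $\int_\Omega S \d\x = 0$ ensures it vanishes on constants). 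Fixing the zero-average normalization then pins down $p^h$.

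Next I plug the specific test function $\psi_i^h = \phi^{NE}_{i;1} + \phi^{SE}_{i;1} + \phi^{S}_{i;1} + \phi^{SW}_{i;1} + \phi^{NW}_{i;1} + \phi^{N}_{i;1}$ into \eqref{PoissonFEM}. Because $\psi_i^h$ is supported on $U_i$, the integral decomposes as a sum of six triangle contributions. On each triangle, $\grad p^h$ is the constant vector recorded in \eqref{grad_p_h} (and its analogues in the appendix), $\grad \psi_i^h$ is a constant vector of the form $\pm h^{-1}(1,0)$, $\pm h^{-1}(0,1)$ or $\pm h^{-1}(1,1)$, and $\Q_0^h[C]$ is a constant diagonal tensor read off from \eqref{Q0N_2D_1}. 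The sample calculation on $T_i^{NE}$ done in the text shows how the cross terms drop out: the diagonal structure of $\Q_0^h[C]$ together with the fact that $\grad\phi^{NE}_{i;2}$ is horizontal and $\grad\phi^{NE}_{i;3}$ vertical yields exactly
\[
   \int_{T_i^{NE}} \grad p^h \cdot (rI + \Q_0^h[C]) \grad \psi_i^h \d \x = \tfrac{r+C_i^E}{2}(P^h_i - P^h_{i,E}) + \tfrac{r+C_i^N}{2}(P^h_i - P^h_{i,N}).
\]
Repeating this on the remaining five triangles (using the appendix formulae to keep the signs straight), each horizontal edge $(\bfX_i,\bfX_{i,\star})$ with $\star \in \{E,W\}$ contributes two half-terms $\tfrac{r+C_i^\star}{2}(P^h_i - P^h_{i,\star})$, once from each of the two triangles sharing that edge, and likewise for the vertical edges. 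Summing gives \eqref{CforA}, and then \eqref{PoissonFEM} combined with \eqref{S_i^h} produces \eqref{Kirchhoff2D}.

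The one place where genuine care is needed, and which I would flag as the only real obstacle, is the case of boundary nodes $\bfX_i\in\partial\Omega$: these have only three, two or one adjacent triangles, so $\psi_i^h$ is a partial sum of the six basis functions above, and some of the neighbors $\bfX_{i,\star}$ do not exist. One must verify that the triangle-by-triangle calculation still adds up to \eqref{Kirchhoff2D} interpreted with the convention that absent edges contribute nothing. This is exactly the discrete counterpart of the homogeneous Neumann condition: the ``missing'' half-edge contributions on $\partial\Omega$ are precisely what would have enforced the no-flux boundary term in an integration by parts, and since the FE formulation \eqref{PoissonFEM} incorporates the Neumann condition naturally (no boundary integral appears), no extra terms arise. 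Once this is checked for each of the corner, edge and interior cases, the identity \eqref{Kirchhoff2D} holds for all $i\in\mathbb{V}$, finishing the proof.
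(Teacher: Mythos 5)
Your proposal is correct and follows essentially the same route as the paper: test \eqref{PoissonFEM} with $\psi_i^h$, exploit the constancy of $\grad p^h$, $\grad\psi_i^h$ and $\Q_0^h[C]$ on each of the six triangles of $U_i$ to get the half-edge contributions, and sum to obtain \eqref{CforA} and hence \eqref{Kirchhoff2D} with $S_i^h$ as in \eqref{S_i^h}. Your additional remarks on well-posedness of $p^h$ and on the boundary-node cases are sound and supply bookkeeping the paper leaves implicit.
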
}

Note that since $\frac1{h^2} \int_{\Omega} \psi_i^h(\x) \d\x = 1$, and, by assumption, $S\in L^2(\Omega)$,
the Lebesgue differentiation theorem gives
\[
    S_i^h = \frac{1}{h^2} \int_\Omega S \psi^h_i \d \x \to S(\bar\x) \qquad\mbox{for a.e. $\bar\x=\bfX_i$ as } h=1/N\to 0.
\]
Consequently, $(S_i^h)_{h>0}$ is an approximating sequence for the datum $S=S(\x)$.

\subsection{Reformulation of the discrete energy functional}\label{subsec:Reform2D}
We reformulate the energy functionals \eqref{EN2D}--\eqref{Kirchhoff2D}
such that they are defined on the space $L_+^\infty(\Omega)^{2\times 2}_\mathrm{diag}$
of essentially bounded diagonal nonnegative tensors on $\Omega$.
We define the functional $\E^h: L_+^\infty(\Omega)^{2\times 2}_\mathrm{diag} \to \R$,
\(  \label{barEN2D}
   \E^h[c] := \int_\Omega \grad p^h[c] \cdot (rI + c)\grad p^h[c] + \frac{\nu}{\gamma} \left( |r+c_1|^{\gamma} + |r+c_2|^{\gamma}  \right)\d\x,
\)
where $p^h[c]\in W^h$ is a solution of the finite element problem \eqref{PoissonFEM}.

\begin{proposition}\label{prop:reform2D}
Let {$S\in L^2(\Omega)$ with $\int_\Omega S \d \mathbf{x}=0$} and $S_i^h$ be given by \eqref{S_i^h}.
Then for any vector of nonnegative conductivities $C=(C_i)_{i\in E^N}$, we have
\[
   E^h[C] = \E^h[\Q_0^h[C]],
\]
with $E^h$ defined in \eqref{EN2D} and $\E^h$ given by \eqref{barEN2D}.
\end{proposition}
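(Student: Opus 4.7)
The plan is to verify the identity $E^h[C]=\E^h[\Q_0^h[C]]$ by a direct termwise computation, splitting both functionals into their pumping and metabolic parts. The essential ingredient, supplied by Lemma \ref{lem:KP2D}, is that the vertex values $P_i^h:=p^h(\bfX_i)$ of the FE solution $p^h\in W^h$ associated to $c=\Q_0^h[C]$ are precisely the discrete pressures solving the Kirchhoff law \eqref{Kirchhoff2D} with sources $S_i^h$. Hence the discrete differences $(P_i-P_{i,\star})/h$ appearing in \eqref{EN2D} coincide, entry by entry, with the components of $\grad p^h$ on the adjacent triangles, as recorded in \eqref{grad_p_h} and its analogues in Section \ref{subsec:App2} of the Appendix.

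First I would treat the metabolic term. Since $c_1$ and $c_2$ are constant on each triangle $T\in\calT^h$ with $|T|=h^2/2$, the integral $\int_\Omega (r+c_1)^\gamma+(r+c_2)^\gamma\d\x$ decomposes into a sum over triangles via \eqref{Q0N_2D_1}. A short bookkeeping argument shows that each horizontal edge of the mesh is the ``$c_1$-edge'' of exactly two triangles, and each vertical edge is the ``$c_2$-edge'' of exactly two triangles. Combining the two area contributions $h^2/2$ yields a total of $h^2(r+C_e)^\gamma$ per edge $e\in\mathbb{E}$, which matches the discrete metabolic sum $\tfrac{h^2}{2}\sum_{i\in\mathbb{V}}\sum_\star(r+C_i^\star)^\gamma$ once one observes that the latter double-counts each edge through the pair $(i,\star)\leftrightarrow(i,\star)^{-1}$.

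Next I would handle the pumping term. On each triangle, $\grad p^h$ is the constant vector determined by \eqref{grad_p_h} (and its five analogues from Section \ref{subsec:App2}). Because $c=\Q_0^h[C]$ is diagonal, the integrand $\grad p^h\cdot(rI+c)\grad p^h$ reduces on each $T$ to a sum of two terms of the form $(r+C_i^\star)\bigl((P_i^h-P_{i,\star}^h)/h\bigr)^2$, one per coordinate direction. Multiplying by $|T|=h^2/2$ and summing over $\calT^h$, the two-triangles-per-edge rule again gives a contribution $h^2(r+C_e)\bigl((P_i^h-P_{i,\star}^h)/h\bigr)^2$ per edge, matching the discrete pumping term in \eqref{EN2D} after the same $(i,\star)$-double-counting correction.

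The argument is essentially combinatorial bookkeeping; there is no real analytic obstacle. The only point requiring genuine care is compatibility of the FE gradient formulae on the four ``diagonal'' triangles $T_i^{NE},T_i^{NW},T_i^{SW},T_i^{SE}$ and the two ``axis-aligned'' triangles $T_i^{N},T_i^{S}$ with the horizontal/vertical edge identification \eqref{Q0N_2D_1}: one must verify that on every triangle the first component of $\grad p^h$ is indeed $\pm(P_{i,E}^h-P_i^h)/h$ or $\pm(P_{i,W}^h-P_i^h)/h$, and similarly for the second component and the North/South edges. This is exactly what the explicit computations in Sections \ref{subsec:App1}--\ref{subsec:App3} of the Appendix establish, and once they are invoked the proposition follows by summation.
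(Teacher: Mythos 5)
Your proposal is correct and follows essentially the same route as the paper: invoke the Kirchhoff--FEM correspondence of Lemma \ref{lem:KP2D}, evaluate the integrand triangle by triangle using the constant gradients \eqref{grad_p_h} and the definition \eqref{Q0N_2D_1} of $\Q_0^h[C]$, and sum using $|T|=h^2/2$ together with the two-triangles-per-edge versus two-endpoints-per-edge bookkeeping. You merely spell out the edge-counting and the metabolic term more explicitly than the paper's proof, which states the computation only for $T_i^{NE}$ and the pumping part and leaves the rest as ``analogous.''
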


\begin{proof}
We have shown in Section \ref{subsec:FEM-Poisson} that if $p^h=p^h(\x)$ denotes a solution of the finite element problem \eqref{PoissonFEM}
with $c=\Q_0^h[C]$, then the vertex values $P_i^h := p^h(\bfX_i)$ satisfy the Kirchhoff law \eqref{Kirchhoff2D}.
Moreover, using \eqref{grad_p_h} and the definition \eqref{Q0N_2D_0}--\eqref{Q0N_2D_1} of $\Q_0^h[C]$, we calculate
\[
   \int_{T_i^{NE}} \grad p^h \cdot (rI+\Q_0^h[C]) \grad p^h \d\x = |T_i^{NE}| \left( (r+C_i^E) \left(\frac{P_{i,E}^h - P_i^h}{h}\right)^2 + (r+C_i^N) \left(\frac{P_{i,N}^h - P_i^h}{h}\right)^2 \right)
\]
for each $i\in \mathbb{V}$, and analogously for all other triangles.
Noting that $ |T_i^{NE}| = h^2/2$ and summing over all triangles, we obtain the formula \eqref{EN2D} for the discrete energy $E^h[C]$.
\end{proof}

\subsection{Convergence of the energy functional}\label{subsec:conv2D}
With Proposition \ref{prop:reform2D}, our task is now to prove the convergence of the sequence of functionals $\E^h$
given by \eqref{barEN2D} towards
\(  \label{barE2D}
   \E[c] := \int_\Omega \grad p[c] \cdot (rI + c)\grad p[c] + \frac{\nu}{\gamma} \left( |r+c_1|^{\gamma} + |r+c_2|^{\gamma}  \right)\d\x,
\)
where $p[c]\in H^1(\Omega)$ is a weak solution of the Poisson equation \eqref{PoissonForFEM} subject to no-flux boundary conditions,
and $c_1$, $c_2$ are the diagonal entries of $c= \begin{pmatrix} c_1 & 0 \\ 0 & c_2 \end{pmatrix}$.
Similarly as in Section \ref{subsec:conv1D} we choose to work in the space $L_+^\infty(\Omega)^{2\times 2}_\mathrm{diag}$
of diagonal nonnegative tensors on $\Omega$ with essentially bounded entries, equipped with the norm
topology of $L^2(\Omega)$. Note that for $c\in L_+^\infty(\Omega)^{2\times 2}_\mathrm{diag}$ the Poisson equation
\eqref{PoissonForFEM} has a solution $p[c]\in H^1(\Omega)$, unique up to an additive constant,
and $\E[c] < +\infty$.


\begin{lemma}
\label{lem:conv2D}
For any sequence of nonnegative diagonal tensors $(c^N)_{N\in\N} \subset L_+^\infty(\Omega)^{2\times 2}_\mathrm{diag}$
with entries uniformly bounded in $L^\gamma(\Omega)$ and
converging entrywise to $c\in L_+^\gamma(\Omega)^{2\times 2}_\mathrm{diag}$ in the norm topology of $L^2(\Omega)$ as $h=1/N\to 0$,
we have, 
\(  \label{conv2D_res}
   \E[c] \leq \liminf_{h=1/N\to 0} \E^h[c^N],
\)
with $\E^h$ given by \eqref{barEN2D} and $\E$ defined in \eqref{barE2D}.
\end{lemma}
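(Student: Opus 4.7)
The plan is to split $\E^h[c^N]$ into its pumping part $A^h[c^N] := \int_\Omega \grad p^h[c^N]\cdot(rI+c^N)\grad p^h[c^N]\d\x$ and the metabolic term, and to analyze them separately. The metabolic term is handled by a direct Fatou argument: since $c^N\to c$ in $L^2(\Omega)$, a (not relabeled) subsequence converges a.e.\ on $\Omega$, so the nonnegative integrands $|r+c_i^N|^\gamma$ converge pointwise to $|r+c_i|^\gamma$ for $i=1,2$ and Fatou gives
\[
   \int_\Omega \frac{\nu}{\gamma}\bigl(|r+c_1|^\gamma+|r+c_2|^\gamma\bigr)\d\x \leq \liminf_{h=1/N\to 0}\int_\Omega \frac{\nu}{\gamma}\bigl(|r+c_1^N|^\gamma+|r+c_2^N|^\gamma\bigr)\d\x.
\]
For the pumping term I actually expect full convergence along an extracted subsequence, not merely a liminf inequality.

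The first step in analyzing $A^h[c^N]$ is a uniform $H^1(\Omega)$-bound on the zero-mean FEM solutions $p^h[c^N]$. Using $p^h[c^N]\in W^h$ itself as a test function in \eqref{PoissonFEM} yields the energy identity $A^h[c^N]=\int_\Omega S\,p^h[c^N]\d\x$, and combining this with the nonnegativity of $c^N$ and the Poincar\'e inequality for zero-mean functions (in the same spirit as \eqref{auxEst1}) gives $\Norm{p^h[c^N]}_{H^1(\Omega)}$ uniformly bounded by a constant depending only on $r$ and $\Norm{S}_{L^2(\Omega)}$. Along any subsequence $h_k$ realising the $\liminf$ of $\E^h[c^N]$, I further extract (without relabeling) so that $c^{N_k}\to c$ a.e.\ on $\Omega$, $p^{h_k}[c^{N_k}]\rightharpoonup p^*$ weakly in $H^1(\Omega)$, and $p^{h_k}[c^{N_k}]\to p^*$ strongly in $L^2(\Omega)$ by Rellich compactness.

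The key step is identifying $p^*=p[c]$, and this is where the two limits $h\to 0$ and $c^N\to c$ interact. For $\phi\in C^\infty(\bar\Omega)$ I denote by $\psi^{h_k}\in W^{h_k}$ its piecewise-linear nodal interpolant; standard finite-element approximation theory supplies $\psi^{h_k}\to\phi$ strongly in $H^1(\Omega)$ together with a uniform bound $\Norm{\grad\psi^{h_k}}_{L^\infty(\Omega)}\leq\Norm{\grad\phi}_{L^\infty(\Omega)}$. Using $\psi^{h_k}$ as a test function in \eqref{PoissonFEM} with conductivity $c^{N_k}$ gives
\[
   \int_\Omega \grad p^{h_k}[c^{N_k}]\cdot(rI+c^{N_k})\grad\psi^{h_k}\d\x = \int_\Omega S\,\psi^{h_k}\d\x.
\]
The $L^2$-convergence $c^{N_k}\to c$ combined with the uniform Lipschitz bound on $\psi^{h_k}$ yields strong convergence $(rI+c^{N_k})\grad\psi^{h_k}\to(rI+c)\grad\phi$ in $L^2(\Omega)^2$; together with the weak $L^2$-convergence of $\grad p^{h_k}[c^{N_k}]$ the left-hand side tends to $\int_\Omega\grad p^*\cdot(rI+c)\grad\phi\d\x$, while the right-hand side converges to $\int_\Omega S\phi\d\x$. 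Density of $C^\infty(\bar\Omega)$ in $H^1(\Omega)$ then identifies $p^*$ as the unique zero-mean weak solution $p[c]$ of \eqref{PoissonForFEM}.

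Finally, strong $L^2$-convergence $p^{h_k}[c^{N_k}]\to p[c]$ inserted into the energy identity yields $A^{h_k}[c^{N_k}] = \int_\Omega S\,p^{h_k}[c^{N_k}]\d\x \to \int_\Omega S\,p[c]\d\x = \int_\Omega \grad p[c]\cdot(rI+c)\grad p[c]\d\x$, where the last equality follows by testing \eqref{PoissonForFEM} against $p[c]$. Adding this convergence of the pumping term to the Fatou bound on the metabolic term delivers $\E[c]\leq \liminf_{h\to 0}\E^h[c^N]$. I expect the principal obstacle to be the identification step: the joint passage in the product $c^{N_k}\grad\psi^{h_k}$ is only justified because $\grad\psi^{h_k}$ enjoys a uniform $L^\infty$ bound thanks to the smoothness of $\phi$, and a direct appeal to Lemma \ref{lem:weak-strong} is blocked because $p^h[c^N]$ is the FEM rather than the continuum solution for $c^N$, so the convergence has to be forced by hand through the FEM formulation.
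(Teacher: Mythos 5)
Your proof is essentially correct, but it takes a genuinely different route to the pumping term than the paper does. The paper never extracts a weak limit of the FEM solutions directly: it introduces the intermediate continuum solutions $p^N=p[c^N]$, invokes the auxiliary Lemma \ref{lem:weak-strong} to get $\int_\Omega \grad p^N\cdot(rI+c^N)\grad p^N\d\x \to \int_\Omega \grad p\cdot(rI+c)\grad p\d\x$, and then controls the gap between $p^h[c^N]$ and $p[c^N]$ by Galerkin orthogonality and a C\'ea-type estimate in the weighted energy seminorm $|u|_{B^N}=(\int_\Omega\grad u\cdot(rI+c^N)\grad u\d\x)^{1/2}$, splitting $|p^N|_{B^N}^2=|p^N-p^h|_{B^N}^2+|p^h|_{B^N}^2$ and showing the first term vanishes. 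You bypass $p[c^N]$ entirely: uniform $H^1$ bound from the discrete energy identity, weak compactness, identification of the limit by passing to the limit in \eqref{PoissonFEM} against nodal interpolants of smooth test functions, and then the same energy identity $A^h[c^N]=\int_\Omega S\,p^h[c^N]\d\x$ to convert weak convergence of $p^h$ into convergence of the quadratic form. Your version is more self-contained (no C\'ea, no approximation theory in the $c^N$-weighted norm, where the paper has to be careful because the weights are not uniformly bounded in $L^\infty$); the price is the by-hand identification step. Two points there deserve more care than you give them. First, to get $(rI+c^{N_k})\grad\psi^{h_k}\to(rI+c)\grad\phi$ in $L^2(\Omega)$ you need more than the uniform bound $\Norm{\grad\psi^{h_k}}_{L^\infty}\leq\Norm{\grad\phi}_{L^\infty}$: the term $c\,(\grad\psi^{h_k}-\grad\phi)$ with $c$ only in $L^2(\Omega)$ requires $\grad\psi^{h_k}\to\grad\phi$ in $L^\infty(\Omega)$, which does hold for nodal interpolants of $C^2$ functions on this quasi-uniform mesh (rate $\mathcal{O}(h)$), but should be stated. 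Second, the final identity $\int_\Omega S\,p[c]\d\x=\int_\Omega\grad p[c]\cdot(rI+c)\grad p[c]\d\x$ requires $p[c]$ to be an admissible test function, i.e.\ $\int_\Omega c\,|\grad p[c]|^2\d\x<\infty$, and likewise the uniqueness of the zero-average weak solution for $c\notin L^\infty(\Omega)$ is not automatic; both are supplied by the weak-lower-semicontinuity argument of Lemma \ref{lem:weak-strong} applied to $\sqrt{c^{N_k}}\grad p^{h_k}$, and you should say so explicitly. With those two repairs the argument is complete and yields, as in the paper, full convergence of the pumping term combined with Fatou for the metabolic term.
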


\begin{proof}
Due to the strong convergence of the entries of $c^N$ in $L^2(\Omega)$ there exist a subsequence converging almost everywhere
in $\Omega$ to $c$. Then, we have by the Fatou Lemma,
\(  \label{conv_met}
   \int_\Omega |r+c_1|^{\gamma} \d\x \leq \liminf_{h=1/N\to 0} \int_\Omega |r+c^N_1|^{\gamma} \d\x,
\)
which is finite due to the uniform boundedness of $c_1^N$ in $L^\gamma(\Omega)$.
Similarly for $c^N_2$.

For the sequel let us denote $p:=p[c]\in H^1(\Omega)$ is a solution of the Poisson equation \eqref{PoissonForFEM} with conductivity $c$,
$p^N:=p[c^N]$ a solution of the Poisson equation \eqref{PoissonForFEM} with conductivity $c^N$
and $p^h:=p^{h}[c^N]\in W^h$ a solution of the finite element discretization \eqref{PoissonFEM}
with $h=1/N$ and conductivity $c^N$.
Then, by an obvious modification of the auxiliary Lemma \ref{lem:weak-strong} for diagonal tensor-valued conductivities we have
by \eqref{aux:lim},
\(  \label{2Dconv1}
   \int_\Omega \grad p\cdot (rI+c) \grad p \d \x = \lim_{N\to\infty} \int_\Omega \grad p^N \cdot (rI+c^N) \grad p^N \d \x.
\)
Let us define the bilinear forms $B^N: H^1(\Omega)\times H^1(\Omega) \to \R$,
\[
   B^N(u,v) = \int_\Omega \grad u \cdot (rI+c^N) \grad v \d x.
\]
Note that $B^N(u,v) < +\infty$ for $u$, $v\in H^1(\Omega)$ since $c^N \in L_+^\infty(\Omega)^{2\times 2}_\mathrm{diag}$.
Moreover, since $rI+c^N$ is symmetric and positive definite, $B^N$ induces a seminorm on $H^1(\Omega)$,
\[
   \left|u\right|_{B^N} := \sqrt{B^N(u,u)}\qquad\mbox{for } u \in H^1(\Omega).
\]
With this notation we have
\[
   \int_\Omega \grad p^N \cdot (rI+c^N) \grad p^N \d \x = \left|p^N\right|_{B^N}^2.
\]
We now proceed along the lines of standard theory of the finite element method (proof of C\'ea\'s Lemma in the energy norm, see, e.g., \cite{Ciarlet}).
Due to the Galerkin orthogonality
\(  \label{GO}
   B^N(p^N - p^h, \psi) = 0\qquad\mbox{for all }\psi\in W^h,
\)
we have, noting that $p^h\in W^h$,
\[
   \left|p^N\right|_{B^N}^2 = \left|p^N-p^h\right|_{B^N}^2 + \left|p^h\right|_{B^N}^2.
\]
Then, again by \eqref{GO} and by the Cauchy-Schwartz inequality, we have for all $\psi\in W^h$,
\[
   \left|p^N-p^h\right|_{B^N}^2 = B^N(p^N-p^h, p^N-\psi)
      \leq \left|p^N-p^h\right|_{B^N}  \left| p^N-\psi \right|_{B^N}.
\] 
Therefore, with the triangle inequality,
\[
   \left|p^N-p^h\right|_{B^N} \leq \inf_{\psi\in W^h} \left|p^N-\psi\right|_{B^N}
     \leq \left|p^N-p\right|_{B^N} + \inf_{\psi\in W^h} \left|p-\psi\right|_{B^N}.
\]
Due to the strong convergence of $c^N\to c$ in $L^2(\Omega)$ and 
the standard result of approximation theory, see, e.g., \cite{Ciarlet}, we have
\[
   \lim_{h=1/N\to 0} \; \inf_{\psi\in W^h} \left|p-\psi\right|_{B^N}^2 &\leq&
          \lim_{h\to 0}  \inf_{\psi\in W^h} \int_\Omega \grad (p-\psi)\cdot (rI+c) \grad (p-\psi) \d \x \\
    && +  \lim_{N\to\infty} \int_\Omega \grad p \cdot (c^N-c) \grad p \d \x \\
    &=& 0.
\]
Due to \eqref{2Dconv1} and the weak convergence of $p^N\wto p$ in $H^1(\Omega)$,
\(  \label{2Dconv2}
   \lim_{N\to \infty} \left|p^N-p\right|_{B^N} = 0.
\)
Thus, collecting the above results from \eqref{2Dconv1} up to \eqref{2Dconv2}, we conclude that
\[
   \int_\Omega \grad p\cdot (rI+c) \grad p \d \x = \lim_{N\to\infty} \left|p^N\right|_{B^N}^2 = \lim_{h=1/N\to 0} \left|p^h\right|_{B^N}^2
     = \lim_{h=1/N\to 0} \int_\Omega \grad p^h\cdot (rI+c^N) \grad p^h \d \x,
\]
which together with \eqref{conv_met} gives \eqref{conv2D_res}.
\end{proof}

\begin{remark} \label{rem:conv2D_strong}
Note that if $\gamma>1$ and with the assumption that the sequence $(c^N)_{N\in\N}$ converges (entrywise)
in the norm topology of $L^\gamma(\Omega)$, the statement of Lemma \ref{lem:conv2D}
can be strengthened to
\[  
   \E[c] = \lim_{h=1/N\to 0} \E^h[c^N].
\]
This follows directly from the fact that in this case we have for the metabolic term
\[
   \int_\Omega |r+c_1|^{\gamma} + |r+c_2|^{\gamma} \d\x = \lim_{h=1/N\to 0} \int_\Omega |r+c^N_1|^{\gamma} + |r+c^N_2|^{\gamma} \d\x.
\]
\end{remark}

\noindent
Lemma \ref{lem:conv2D} and Remark \ref{rem:conv2D_strong} trivially imply
the $\Gamma$-convergence of the sequence of energy functionals $\E^h$
in the norm topology of $L^\gamma(\Omega)$ for $\gamma>1$:

\begin{theorem}
\label{thm:GammaConv2D}
Let $\gamma > 1$, {$S\in L^2(\Omega)$ with $\int_\Omega S \d \mathbf{x}=0$} and $S_i^h$ be given by \eqref{S_i^h}.
Then the sequence $\E^h$ given by \eqref{barEN2D} $\Gamma$-converges
to $\E$ defined in \eqref{barE2D} with respect to the norm topology of $L^\gamma(\Omega)$
on the set $L_+^\infty(\Omega)^{2\times 2}_\mathrm{diag}$.
In particular:
\begin{itemize}
\item
For any sequence $(c^N)_{N\in\N} \subset L_+^\infty(\Omega)^{2\times 2}_\mathrm{diag}$
converging entrywise to $c\in L_+^\gamma(\Omega)^{2\times 2}_\mathrm{diag}$ in the norm topology of $L^\gamma(\Omega)$ as $h=1/N\to 0$,
we have
\[  
   \E[c] \leq \liminf_{h=1/N\to 0} \E^h[c^N].
\]

\item
For any $c\in L_+^\infty(\Omega)^{2\times 2}_\mathrm{diag}$ there exists a sequence $(c^N)_{N\in\N} \subset L_+^\infty(\Omega)^{2\times 2}_\mathrm{diag}$
converging entrywise to $c\in L_+^\gamma(\Omega)^{2\times 2}_\mathrm{diag}$ in the norm topology of $L^\gamma(\Omega)$ as $h=1/N\to 0$,
such that
\[   
   \E[c] \geq \limsup_{h=1/N\to 0} \E^h[c^N].
\]
\end{itemize}
\end{theorem}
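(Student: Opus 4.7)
The plan is to derive both halves of the $\Gamma$-convergence statement directly from Lemma \ref{lem:conv2D} and Remark \ref{rem:conv2D_strong}, which already do the real analytic work; what remains is essentially bookkeeping.

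For the liminf inequality, suppose a sequence $(c^N) \subset L_+^\infty(\Omega)^{2\times 2}_{\mathrm{diag}}$ converges entrywise to $c$ in the norm topology of $L^\gamma(\Omega)$. If $\liminf_{h=1/N\to 0} \E^h[c^N] = +\infty$, there is nothing to prove, so I would pass to a subsequence along which $\E^h[c^N]$ is uniformly bounded. The metabolic term of $\E^h[c^N]$ then controls $\|c_1^N\|_{L^\gamma(\Omega)}$ and $\|c_2^N\|_{L^\gamma(\Omega)}$ uniformly. Since $\gamma > 1$ and $\Omega$ is bounded, the continuous embedding $L^\gamma(\Omega) \hookrightarrow L^2(\Omega)$ (for $\gamma \geq 2$) upgrades the $L^\gamma$-convergence to $L^2$-convergence, placing us exactly in the hypotheses of Lemma \ref{lem:conv2D}, which then yields the claim $\E[c] \leq \liminf_h \E^h[c^N]$.

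For the recovery sequence, the simplest choice suffices: given $c \in L_+^\infty(\Omega)^{2\times 2}_{\mathrm{diag}}$, take $c^N := c$ for every $N$. Then $c^N \in L_+^\infty(\Omega)^{2\times 2}_{\mathrm{diag}}$ trivially converges to $c$ in the norm topology of $L^\gamma(\Omega)$, and is uniformly bounded in $L^\infty(\Omega)$ and hence in $L^\gamma(\Omega)$, so Remark \ref{rem:conv2D_strong} applies and gives $\lim_{h \to 0} \E^h[c^N] = \E[c]$. This yields the limsup inequality (in fact as an equality).

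The one subtle point I anticipate is the liminf part in the regime $1 < \gamma < 2$, where the assumed $L^\gamma$-convergence of $c^N$ does not automatically imply $L^2$-convergence as required by Lemma \ref{lem:conv2D}. The natural workaround is a truncation argument: introduce $c^{N,M} := c^N \wedge M$, which is uniformly bounded in $L^\infty$ and, up to extraction of a pointwise a.e. convergent subsequence, converges in $L^2(\Omega)$ to $c \wedge M$. Applying Lemma \ref{lem:conv2D} at each level $M$ bounds $\E[c \wedge M]$ by $\liminf_h \E^h[c^N]$ (using the metabolic term's monotonicity in $M$ and weak lower semicontinuity for the pumping term), and then one passes to the limit $M \to \infty$ using monotone convergence on the metabolic part together with Lemma \ref{lem:weak-strong} on the pumping part. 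This technicality aside, the rest of the argument is a direct citation of the earlier results.
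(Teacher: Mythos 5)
Your main line of argument is exactly the paper's proof: the $\liminf$ inequality is obtained by citing Lemma \ref{lem:conv2D}, and the recovery sequence is the constant sequence $c^N:=c$ together with Remark \ref{rem:conv2D_strong}. For $\gamma\ge 2$ your reduction is complete and correct: convergence in $L^\gamma(\Omega)$ on the bounded domain $\Omega$ yields both the $L^2(\Omega)$-convergence and the uniform $L^\gamma(\Omega)$-bound that Lemma \ref{lem:conv2D} requires (and, as you note, the uniform $L^\gamma$ bound is already implied by norm convergence in $L^\gamma$, so the preliminary passage to a subsequence with bounded energy is not needed). You have also put your finger on a point that the paper itself glosses over: for $1<\gamma<2$ the theorem's hypothesis ($L^\gamma$-convergence of $c^N$) is strictly weaker than the lemma's ($L^2$-convergence), so ``follows directly from Lemma \ref{lem:conv2D}'' is not literally accurate in that range.

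However, the truncation argument you sketch for $1<\gamma<2$ does not close this gap as written. To pass from Lemma \ref{lem:conv2D} applied to $c^{N,M}:=c^N\wedge M$ to a bound on $\liminf_h\E^h[c^N]$ you need, in effect, $\E^h[c^{N,M}]\le\E^h[c^N]$ up to errors controlled in $M$. This holds for the metabolic term, but fails for the pumping term: since $p^h[c]$ minimizes $q\mapsto\frac12\int_\Omega\grad q\cdot(rI+c)\grad q\,\d\mathbf{x}-\int_\Omega Sq\,\d\mathbf{x}$ over $W^h$ and the pumping term equals $\int_\Omega S\,p^h[c]\,\d\mathbf{x}$, the pumping energy is \emph{non-increasing} in $c$; truncating $c^N$ therefore \emph{increases} it, which is the wrong direction, and there is no weak-lower-semicontinuity relation between $p^h[c^{N,M}]$ and $p^h[c^N]$ (two different discrete solutions) that substitutes for the missing monotonicity. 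A route that does work for $1<\gamma<2$ is to exploit this same variational structure directly on $c^N$: for any fixed Lipschitz $q$ with interpolants $q^h\in W^h$ one has the lower bound $\int_\Omega S\,p^h[c^N]\,\d\mathbf{x}\ge 2\int_\Omega Sq^h\,\d\mathbf{x}-\int_\Omega\grad q^h\cdot(rI+c^N)\grad q^h\,\d\mathbf{x}$, whose limit requires only $L^1$-convergence of $c^N$, and one then takes the supremum over $q$; the metabolic term is handled by Fatou exactly as in \eqref{conv_met}. In short: your proof coincides with the paper's where the paper's proof is complete, you correctly diagnose the one regime where the citation is not literal, but your proposed repair for that regime is based on a monotonicity that points the wrong way.
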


\begin{proof}
The $\liminf$-statement follows directly from Lemma \ref{lem:conv2D}.
For the $\limsup$-statement it is sufficient to set $c^N:=c$ for all $N\in\N$ and use Remark \ref{rem:conv2D_strong},
which in fact leads to the stronger statement
\[   
   \E[c] = \lim_{h=1/N\to 0} \E^h[c^N].
\]
\end{proof}

\def\bfM{\mathbf{M}}
\def\D{\mathbb{D}}

\subsection{Introduction of diffusion and construction of continuum energy minimizers ($\gamma>1$)}\label{subsec:diff2D}
As in the one-dimensional case, we introduce a diffusive term into the discrete energy functionals,
which shall provide compactness of the sequence of energy minimizers.
We again construct a piecewise linear approximation of the discrete conductivities $C$,
which, however, turns out to be technically quite involved in the two-dimensional situation.

We shall describe the process for the conductivities of the horizontal edges, 
and by a slight abuse of notation, we denote $C_{i+1/2,j}$ the conductivity
of the horizontal edge connecting the node $(ih, jh)$ to $((i+1)h,jh)$ {for $i=0,\ldots, N-1, j=0,\ldots,N$ where $h=1/N$}.
Moreover, we denote $\bfM_{i+1/2,j}$ the midpoint of this edge, i.e., $\bfM_{i+1/2,j} = ((i+1/2)h, jh)$.
For a given vector of conductivities $C$, we {construct the continuous function $\Q_1^h[C]$ on $\Omega$, 
such that
\[
   \Q_1^h[C](\bfM_{i+1/2,,j}) = C_{i+1/2,j},\qquad\mbox{for } i = 0,\dots,N-1,\; j=0,\dots,N,
\]
and $\Q_1^h[C]$ is linear on each triangle spanned by the nodes
$\bfM_{i-1/2,j}$, $\bfM_{i+1/2,j}$, $\bfM_{i-1/2,j+1}$ and
on each triangle spanned by the nodes $\bfM_{i+1/2,j}$, $\bfM_{i+1/2,j+1}$, $\bfM_{i-1/2,j+1}$,
for $i=1,\ldots,N-1,\; j=0,\ldots,N-1$.
Let us denote the union of such two triangles, i.e., the square spanned by the nodes
$\bfM_{i-1/2,j}$, $\bfM_{i+1/2,j}$, $\bfM_{i-1/2,j+1}$ and $\bfM_{i+1/2,j+1}$,
by $W_{ij}$. Then, a simple calculation reveals that
\(  \label{intW}
	\int_{W_{ij}}|\nabla \Q_1^h[C]|^2\d \mathbf{x} 
	   &=& \frac12 \left[ (C_{i-1/2,j}-C_{i+1/2,j})^2 + (C_{i+1/2,j}-C_{i+1/2,j+1})^2\right. \\ && \left. + (C_{i+1/2,j+1}-C_{i-1/2,j+1})^2 + (C_{i-1/2,j+1}-C_{i-1/2,j})^2 \right].
	\nonumber
\)
}
On the ``boundary stripe'' $(0,h/2)\times(0,1)$ and $(1-h/2,1)\times (0,1)$
the function is defined to be constant in the $x$-direction, such that it is
globally continuous on $\Omega$, {i.e., 
\begin{align*}
	\Q_1^h[C](\mathbf{x})&:=\frac{C_{1/2,j+1}-C_{1/2,j}}{h} (x_2-jh)+C_{1/2,j},\qquad \mbox{for }\mathbf{x}=(x_1,x_2)\in (0,h/2)\times (jh,(j+1)h),\\
	\Q_1^h[C](\mathbf{x})&:=\frac{C_{N-1/2,j+1}-C_{N-1/2,j}}{h}(x_2-jh)+C_{N-1/2,j},\qquad \mbox{for }\mathbf{x}=(x_1,x_2)\in (1-h/2,1)\times (jh,(j+1)h)
\end{align*}
for $j=0,\ldots,N-1$.}
Summing up \eqref{intW} over all squares $W_{ij}$ and the boundary stripe, we arrive at
\(  \label{D_x}
   \int_\Omega |\grad \Q_1^h[C]|^2 \d\x = \D_x[C],
\)
with
{
	\[
	\D_x[C] := &&\hspace{-5mm} \sum_{i=0}^{N-1} \sum_{j=0}^{N-1} (C_{i+1/2,j}-C_{i+1/2,j+1})^2 + \sum_{i=1}^{N-1} \sum_{j=1}^{N-1} (C_{i-1/2,j}-C_{i+1/2,j})^2 \\
	     && + \frac12 \sum_{i=1}^{N-1} \left[ (C_{i-1/2,0}-C_{i+1/2,0})^2 + (C_{i-1/2,N}-C_{i+1/2,N})^2 \right].
	\]
}
Performing the same procedure for the vertical edges, we obtain
\(  \label{D_y}
   \int_\Omega |\grad \Q_2^h[C]|^2 \d\x = \D_y[C],
\)
with obvious definitions of $ \Q_2^h[C]$ and $\D_y[C]$.

Consequently, we define the sequence of discrete energy functionals $E^h_\mathrm{diff}$,
\(  \label{EN2DwD}
   E^h_\mathrm{diff}[C] := D^2 \left( \D_x[C] + \D_y[C] \right) + E^h[C],
\)
with $D^2>0$ diffusion constant and $E^h[C]$ defined in \eqref{EN2D}, coupled to the Kirchhoff law \eqref{Kirchhoff2D}
with sources/sinks $ S_i^h$ given by \eqref{S_i^h}.
We then have:

\begin{proposition}\label{prop:p2D2}
For any vector ${C = (C_i)_{i\in \mathbb{E}}}$ of nonnegative entries, we have
\[
    E^h_\mathrm{diff}[C] = D^2 \int_\Omega |\grad \Q_1^h[C]|^2 + |\grad \Q_2^h[C]|^2 \d\x + \E^h[\Q_0^h[C]],
\]
with $E^h_\mathrm{diff}$ defined in \eqref{EN2DwD} and
$\E^h$ given by \eqref{barEN2D} with the pressures $p^h$
being a solution of the FEM-discretized Poisson equation \eqref{PoissonFEM} with $c = \Q_0^N[C]$.
\end{proposition}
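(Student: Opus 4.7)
The proposition is essentially a bookkeeping statement that collects two facts already established in the construction leading up to it. The plan is to decompose $E^h_\mathrm{diff}[C]$ according to its definition \eqref{EN2DwD} into the diffusive contribution $D^2(\D_x[C]+\D_y[C])$ and the non-diffusive part $E^h[C]$, and then handle each piece separately.

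For the diffusive part, I would invoke the identities \eqref{D_x} and \eqref{D_y}, namely
\[
   \int_\Omega |\grad \Q_1^h[C]|^2 \d\x = \D_x[C],\qquad
   \int_\Omega |\grad \Q_2^h[C]|^2 \d\x = \D_y[C],
\]
which follow from the explicit piecewise linear construction of $\Q_1^h[C]$ and $\Q_2^h[C]$ on the triangulations attached to the edge midpoints, together with the constant continuation over the boundary stripes. The only verification needed here is the per-square computation \eqref{intW} (which yields a sum of squares of differences of the values of $\Q_1^h[C]$ at the four vertices of each square $W_{ij}$), followed by a summation over all interior squares and the boundary stripes so that each horizontal difference and each vertical difference appears with the correct weight. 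The boundary stripes contribute, by construction, only the vertical-difference squares $(C_{i\pm 1/2,j+1}-C_{i\pm 1/2,j})^2$ with coefficient $1/2$ matching the $1/2$ prefactor in \eqref{intW}, so the telescoping of the interior squares with the stripe contributions yields exactly $\D_x[C]$; the analogous argument for the vertical edges yields $\D_y[C]$.

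For the non-diffusive part, I would simply apply Proposition \ref{prop:reform2D}, which gives $E^h[C] = \E^h[\Q_0^h[C]]$ where the pressure $p^h$ in $\E^h[\Q_0^h[C]]$ solves the FEM-discretized Poisson equation \eqref{PoissonFEM} with conductivity tensor $\Q_0^h[C]$ and sources $S_i^h$ given by \eqref{S_i^h}. Adding the two pieces yields
\[
   E^h_\mathrm{diff}[C] = D^2\left(\D_x[C]+\D_y[C]\right) + E^h[C] = D^2\int_\Omega |\grad \Q_1^h[C]|^2 + |\grad \Q_2^h[C]|^2 \d\x + \E^h[\Q_0^h[C]],
\]
which is the claimed identity.

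I do not expect any genuine obstacle here; the statement is a direct corollary of Proposition \ref{prop:reform2D} and the construction of the piecewise linear interpolants $\Q_1^h$, $\Q_2^h$. The most delicate bookkeeping is the careful accounting of the boundary stripes in the derivation of \eqref{D_x}--\eqref{D_y}, making sure that each squared difference in $\D_x[C]$ and $\D_y[C]$ is produced with the right multiplicity from the sum of the per-square integrals \eqref{intW}; however, this has effectively been done in the paragraphs preceding the proposition, so the proof itself is essentially a one-line assembly.
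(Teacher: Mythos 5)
Your proposal is correct and follows exactly the route the paper intends: the paper states Proposition \ref{prop:p2D2} without a written proof, treating it precisely as the assembly you describe — the identities \eqref{D_x}--\eqref{D_y} for the diffusive term together with Proposition \ref{prop:reform2D} for the remaining part. Nothing is missing.
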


\noindent
We are now in shape to prove the main result of this section:

\begin{theorem}
Let $\gamma > 1$, {$S\in L^2(\Omega)$ with $\int_\Omega S \d \mathbf{x}=0$} and $S_i^h$ be given by \eqref{S_i^h}.
Let $(C^N)_{N\in\N}\subset \R^N$ be a sequence of global minimizers of the discrete energy functionals $E^h_\mathrm{diff}$ given by \eqref{EN2DwD}
with $h=1/N$.
Then the sequence of diagonal $2\times 2$ matrices
\[
   c^N := \begin{pmatrix} \Q_1^h[C^N] & 0 \\ 0 & \Q_2^h[C^N] \end{pmatrix}
\]
converges weakly in $H^1(\Omega)^{2\times 2}$ to $c\in H^1(\Omega)^{2\times 2}_+$ as $h=1/N\to 0$,
with $c$ a global minimizer of the functional $\E_\mathrm{diff}: H^1_+(\Omega)^{2\times 2}_\mathrm{diag} \to \R$,
\[
   \E_\mathrm{diff}[c] := D^2 \int_\Omega |\grad c_1|^2 + |\grad c_2|^2 \d\x + \E[c],
\]
where $\E[c]$ is given by \eqref{barE2D}.
\end{theorem}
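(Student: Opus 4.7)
The plan is to mirror the 1D argument, using Theorem \ref{thm:GammaConv2D} in place of Lemma \ref{lem:conv1D}. First I would establish uniform bounds on $E^h_{\mathrm{diff}}[C^N]$ by testing minimality against $C\equiv 0$: Proposition \ref{prop:p2D2} reduces the upper bound to the FEM Dirichlet energy for constant conductivity $rI$ with source $S_i^h$, and standard convergence of the associated FEM solution to the weak solution of $-r\laplace p=S$ with no-flux conditions yields a uniform bound on $E^h[0]$. This controls both the discrete diffusion contributions $\D_x[C^N]+\D_y[C^N]$ (hence the $L^2$-gradient norms of $\Q_1^h[C^N]$ and $\Q_2^h[C^N]$ via \eqref{D_x}--\eqref{D_y}) and the metabolic integrals $\int_\Omega (r+c_i^N)^\gamma\d\x$ for $i=1,2$, with $c^N:=\Q_0^h[C^N]$.

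Since $\gamma>1$, the $L^\gamma$ and gradient bounds combine to give uniform $H^1(\Omega)$ control on each $\Q_i^h[C^N]$. Rellich--Kondrachov extracts a (non-relabelled) subsequence with $\Q_i^h[C^N]\wto c_i$ weakly in $H^1(\Omega)$ and strongly in $L^2(\Omega)$. I would then verify that $\Q_0^h[C^N]$ admits the same entrywise $L^2$-limit $c:=\mathrm{diag}(c_1,c_2)$ by means of the estimate
\[
   \Norm{\Q_1^h[C^N]-(\Q_0^h[C^N])_{11}}_{L^2(\Omega)}^2 \leq C\, h^2 \bigl(\D_x[C^N]+\D_y[C^N]\bigr) \longrightarrow 0,
\]
together with its analogue for the second entry; these follow from the fact that both reconstructions agree at the edge midpoints while the piecewise-linear interpolant varies by $O(h)$ times the discrete gradient on each cell.

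With $c^N\to c$ strongly in $L^2(\Omega)$, Lemma \ref{lem:conv2D} combined with Proposition \ref{prop:reform2D} gives $\E[c]\leq \liminf_{N\to\infty} E^h[C^N]$, while weak lower semicontinuity of the $L^2$-norm of the gradient yields the corresponding bound on the diffusion term. Together they produce $\E_{\mathrm{diff}}[c]\leq \liminf_{N\to\infty} E^h_{\mathrm{diff}}[C^N]$.

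To prove the global minimality of $c$, I would argue by contradiction: assuming some $\bar c\in H^1_+(\Omega)^{2\times 2}_{\mathrm{diag}}$ satisfies $\E_{\mathrm{diff}}[\bar c]<\E_{\mathrm{diff}}[c]$, construct a recovery sequence $\bar C^N$ by sampling $\bar c_1, \bar c_2$ at the midpoints of the corresponding horizontal and vertical edges. Standard piecewise-linear interpolation estimates (after density approximation of $\bar c$ by smooth bounded tensors, followed by a diagonal argument) will yield strong $H^1$-convergence $\Q_i^h[\bar C^N]\to \bar c_i$ and strong $L^\gamma$-convergence $\Q_0^h[\bar C^N]\to \bar c$. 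Remark \ref{rem:conv2D_strong} then delivers $\E^h[\Q_0^h[\bar C^N]]\to \E[\bar c]$, and Proposition \ref{prop:p2D2} gives $E^h_{\mathrm{diff}}[\bar C^N]\to \E_{\mathrm{diff}}[\bar c]<\E_{\mathrm{diff}}[c]\leq \liminf_{N\to\infty} E^h_{\mathrm{diff}}[C^N]$, contradicting the minimality of $C^N$. The principal obstacle I anticipate is this recovery-sequence construction: one needs the interpolants $\Q_i^h[\bar C^N]$ to converge \emph{strongly} in $H^1$ (not merely weakly) while simultaneously $\Q_0^h[\bar C^N]$ converges in $L^\gamma$, which requires careful treatment of the boundary stripes entering the definition of the $\Q_i^h$ and of the mismatch between midpoint-based interpolation and vertex-based FEM.
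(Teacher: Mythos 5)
Your proposal follows essentially the same route as the paper: uniform bound via $E^h_\mathrm{diff}[C^N]\leq E^h_\mathrm{diff}[0]$ reduced to the FEM Dirichlet energy for conductivity $rI$, weak $H^1$/strong $L^2$ compactness of $\Q_1^h[C^N]$, $\Q_2^h[C^N]$ from the discrete diffusion term, identification of the $L^2$-limit of $\Q_0^h[C^N]$ with the same $c$, the $\liminf$ inequality from Lemma \ref{lem:conv2D} plus weak lower semicontinuity, and a contradiction argument with a recovery sequence handled by Remark \ref{rem:conv2D_strong}. The one place you deviate is the recovery sequence: you sample $\bar c_1,\bar c_2$ at edge midpoints, which is ill-defined for a general $H^1$ tensor in two dimensions and forces the density-plus-diagonal argument you anticipate; the paper instead defines $\bar C^N_i$ as the average of $\bar c_1$ (resp.\ $\bar c_2$) over the two triangles containing the edge $i$, which is well-defined directly on $H^1_+(\Omega)^{2\times 2}_\mathrm{diag}$ and gives the strong $H^1$- and $L^\gamma$-convergences without any intermediate smoothing. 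Your explicit $O(h^2(\D_x+\D_y))$ estimate for $\|\Q_1^h[C^N]-(\Q_0^h[C^N])_{11}\|_{L^2}^2$ and your remark that the $L^\gamma$ bound from the metabolic term is needed to upgrade the gradient bound to full $H^1$ control are both correct and in fact make explicit two steps the paper dismisses as ``easy to check.''
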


\begin{proof}
Let us observe that
\[
   E^h_\mathrm{diff}[C^N] \leq E^h_\mathrm{diff}[0] = 
      \frac{h^2}{2} \sum_{i\in \mathbb{V}} \sum_{\star\in\{E,W,N,S\}} r \left( \frac{\widetilde P_i - \widetilde P_{i,\star}}{h} \right)^2 +\frac{\nu}{\gamma} r^\gamma,
\]
where $(\widetilde P_i)_{i\in \mathbb{V}}$ is a solution of the Kirchhoff law \eqref{Kirchhoff2D} with conductivities $C=0$ and sources/sinks given by \eqref{S_i^h}.
As shown in Section \ref{subsec:FEM-Poisson}, the pressures $\widetilde P_i$ correspond to pointwise values $\widetilde P_i^h := \tilde p^h(\bfX_i)$, $i\in \mathbb{V}$,
of the solution $\tilde p^h$ of the discretized Poisson equation \eqref{PoissonFEM} with conductivity tensor $c=0$.
Moreover, due to formula \eqref{grad_p_h} we have
\[
   \frac{h^2}{2} \sum_{i\in \mathbb{V}} \sum_{\star\in\{E,W,N,S\}} r \left( \frac{\widetilde P_i - \widetilde P_{i,\star}}{h} \right)^2
      =r \int_\Omega |\grad \tilde p^h|^2 \d\x,
\]
and the uniform boundedness of $\grad \tilde p^h$ in $L^2(\Omega)$ implies a uniform bound on $E^h_\mathrm{diff}[C^N]$.

Since the sequence
\[
     D^2 \int_\Omega |\grad \Q_1^h[C^N]|^2 + |\grad \Q_2^h[C^N]|^2 \d\x
     = D^2 \left( \D_x[C^N] + \D_y[C^N] \right)
    \leq E^h_\mathrm{diff}[C^N] 
\]
is uniformly bounded, there exist subsequences of $\Q^h_1[C^N]$ and $\Q^h_2[C^N]$ converging to some $c_1$, $c_2 \in H^1(\Omega)$ weakly in $H^1(\Omega)$,
and strongly in $L^2(\Omega)$.
It is easy to check that then also $\Q^h_0[C^N]$ converges to $c: = \begin{pmatrix} c_1 & 0 \\ 0 & c_2 \end{pmatrix}$ strongly in $L^2(0,1)^{2\times 2}$.
Clearly, we also have $\Q^h_0[C^N]\in L^\infty_+(\Omega)^{2\times 2}_\mathrm{diag}$ with entries uniformly bounded in $L^\gamma(\Omega)$.
Consequently, by Lemma \ref{lem:conv2D}, we have 
\[
   \E[c] \leq \liminf_{h=1/N\to 0} \E^h[c^N].
\]
Moreover, due to the weak lower semicontinuity of the $L^2$-norm, we have
\[
   \int_\Omega |\grad c_1|^2 + |\grad c_2|^2 \d\x  \leq  \liminf_{h=1/N\to 0}  \int_\Omega |\grad \Q_1^h[C^N]|^2 + |\grad \Q_2^h[C^N]|^2 \d\x \,.
\]
Consequently,
\(  \label{gamma_conv_2D}
   \E_\mathrm{diff}[c] \leq   \liminf_{h=1/N\to 0} E^h_\mathrm{diff}[C^N].
\)

We claim that $c$ is a global minimizer of $\E_\mathrm{diff}$ in $H^1_+(\Omega)^{2\times 2}_\mathrm{diag}$.
For contradiction, assume that there exists $\bar c\in H^1_+(\Omega)^{2\times 2}_\mathrm{diag}$ such that
\[
   \E_\mathrm{diff}[\bar c] < \E_\mathrm{diff}[c].
\]
We define the sequence $(\bar C^N)_{N\in\N}$ by setting the conductivity $\bar C^N_i$ of each horizontal edge
$i\in \mathbb{E}$ to the average of $\bar c_1$ over the two triangles $T_{i;1}$, $T_{i;2}\in\calT^h$ that contain the edge $i$, i.e.,
\[
   \bar C^N_i := \frac{1}{h^2} \int_{T_{i;1} \cup T_{i;2}} \bar c_1(x) \d\x.
\]
Similarly, we use the averages of $\bar c_2$ to define the conductivities of the vertical edges.
Then, by assumption, we have for all $h=1/N$, $N\in\N$,
\(   \label{ass-contr2D}
   E^h_\mathrm{diff}[\bar C^N]  \geq E^h_\mathrm{diff}[C^N].
\)
It is easy to check that the sequence $\Q_1^h[\bar C^N]$ converges strongly in $H^1(\Omega)$ 
towards $\bar c_1$, therefore
\[
   \int_\Omega |\grad \Q_1^h[\bar C^N]|^2 \d\x \to  \int_\Omega |\grad \bar c_1|^2 \d\x \qquad\mbox{as } h=1/N\to 0,
\]
and analogously for $\Q_2^h[\bar C^N]$ and $\bar c_2$.
Moreover, the sequence $\Q_0^h[\bar C^N]$ converges to $\bar c$ strongly in $L^\gamma(\Omega)^{2\times 2}_\mathrm{diag}$, therefore,
by Remark \ref{rem:conv2D_strong}, $\E^h[\Q_0^N[\bar C^N]] \to \E[\bar c]$ as $h=1/N\to 0$.
Consequently,
\[
   \lim_{h=1/N\to 0}  E^h_\mathrm{diff}[\bar C^N]  = \E_\mathrm{diff}[\bar c] < \E_\mathrm{diff}[c],
\]
a contradiction to \eqref{gamma_conv_2D}--\eqref{ass-contr2D}.
\end{proof}

\begin{remark}\label{rem:paralellogram}
We can easily generalize to the situation when the two-dimensional grid
is not rectangular, but consists of parallelograms with sides of equal length
in linearly independent directions $\theta_1$, $\theta_2\in\mathbb{S}^1$,
where $\mathbb{S}^1$ is the unit circle in $\R^2$.
Then the coordinate transform
\[
   (1,0) \mapsto \theta_1,\qquad (0,1)\mapsto \theta_2
\]
in \eqref{barE2D} leads to the transformed continuum energy functional
\[
   \E[c] = \int_\Omega \grad p[c]\cdot \mathbb{P}[c]\grad p[c] + \frac{\nu}{\gamma} \left( \left|r+c_1\right|^\gamma + \left|r+c_2\right|^\gamma \right) \d\x
\]
coupled to the Poisson equation
\[
   -\grad\cdot \left(\mathbb{P}[c]\grad p \right) = S
\]
with the permeability tensor
\[
   \mathbb{P}[c] = rI + c_1\theta_1\otimes\theta_1 + c_2\theta_2\otimes\theta_2.
\]
The eigenvalues of $\mathbb{P}[c]$ (principal permeabilities) are
\[
   \lambda_{1,2} = \frac{1}{2} \left( c_1 + c_2 \pm \sqrt{(c_1-c_2)^2 - 4c_1c_2 (\theta_1\cdot\theta_2)^2} \right)
\]
and the corresponding eigenvectors (principal directions)
\[
   u_{1,2} = \theta_1 + \frac{c_2 - c_1 \pm \sqrt{(c_1-c_2)^2 - 4c_1c_2 (\theta_1\cdot\theta_2)^2}}{2 c_1 \theta_1\cdot\theta_2}\theta_2.
\]
\end{remark}

\section{Appendix}\label{sec:App}
Here we provide more technical details for the constructions and calculations
performed in Section~\ref{subsec:FEM-Poisson}.

\subsection{Linear basis functions}\label{subsec:App1}
We list the explicit definitions for the piecewise linear basis functions
on the triangulation $\calT^h$,
constructed in Section \ref{subsec:FEM-Poisson}.
Any interior node $i\in \mathbb{V}$ has six adjacent triangles, denoted clockwise by $T_i^{NE}$, $T_i^{SE}$, $T_i^{S}$, $T_i^{SW}$, $T_i^{NW}$, $T_i^{N}$,
see Fig. \ref{fig:triangles}.
For each triangle we construct three basis functions, supported on the respective triangle and linear on their support.
Obviously, the basis functions are uniquely determined by their values on the triangle vertices.
For later reference we list their gradients, which are constant vectors on the respective triangles.

\begin{itemize}
\item
On the $NE$-triangle $T_i^{NE}$ we construct the linear basis functions $\phi_{i;1}^{NE}$,
$\phi_{i;2}^{NE}$, $\phi_{i;3}^{NE}$ defined by
\[
   \phi_{i;1}^{NE}(\bfX_i) = 1, \quad \phi_{i;1}^{NE}(\bfX_{i,E}) = 0, \quad \phi_{i;1}^{NE}(\bfX_{i,N}) = 0,\\
   \phi_{i;2}^{NE}(\bfX_i) = 0, \quad \phi_{i;2}^{NE}(\bfX_{i,E}) = 1, \quad \phi_{i;2}^{NE}(\bfX_{i,N}) = 0,\\
   \phi_{i;3}^{NE}(\bfX_i) = 0, \quad \phi_{i;3}^{NE}(\bfX_{i,E}) = 0, \quad \phi_{i;3}^{NE}(\bfX_{i,N}) = 1,
\]
so that
\[
   \grad \phi_{i;1}^{NE} \equiv -\frac{1}{h}(1,1), \qquad
   \grad \phi_{i;2}^{NE} \equiv \frac{1}{h}(1,0), \qquad
   \grad \phi_{i;3}^{NE} \equiv \frac{1}{h}(0,1), \qquad
   \mbox{on } T_i^{NE}.
\]
\item
On the $SE$-triangle $T_i^{SE}$ we construct the linear basis functions $\phi_{i;1}^{SE}$,
$\phi_{i;2}^{SE}$, $\phi_{i;3}^{SE}$ defined by
\[
   \phi_{i;1}^{SE}(\bfX_i) = 1, \quad \phi_{i;1}^{SE}(\bfX_{i,E}) = 0, \quad \phi_{i;1}^{SE}(\bfX_{i,SE}) = 0,\\
   \phi_{i;2}^{SE}(\bfX_i) = 0, \quad \phi_{i;2}^{SE}(\bfX_{i,E}) = 1, \quad \phi_{i;2}^{SE}(\bfX_{i,SE}) = 0,\\
   \phi_{i;3}^{SE}(\bfX_i) = 0, \quad \phi_{i;3}^{SE}(\bfX_{i,E}) = 0, \quad \phi_{i;3}^{SE}(\bfX_{i,SE}) = 1,
\]
so that
\[
   \grad \phi_{i;1}^{SE} \equiv -\frac{1}{h}(1,0), \qquad
   \grad \phi_{i;2}^{SE} \equiv \frac{1}{h}(1,1), \qquad
   \grad \phi_{i;3}^{SE} \equiv -\frac{1}{h}(0,1), \qquad
   \mbox{on } T_i^{SE}.
\]
\item
On the $S$-triangle $T_i^{S}$ we construct the linear basis functions $\phi_{i;1}^{S}$,
$\phi_{i;2}^{S}$, $\phi_{i;3}^{S}$ defined by
\[
   \phi_{i;1}^{S}(\bfX_i) = 1, \quad \phi_{i;1}^{S}(\bfX_{i,SE}) = 0, \quad \phi_{i;1}^{S}(\bfX_{i,S}) = 0,\\
   \phi_{i;2}^{S}(\bfX_i) = 0, \quad \phi_{i;2}^{S}(\bfX_{i,SE}) = 1, \quad \phi_{i;2}^{S}(\bfX_{i,S}) = 0,\\
   \phi_{i;3}^{S}(\bfX_i) = 0, \quad \phi_{i;3}^{S}(\bfX_{i,SE}) = 0, \quad \phi_{i;3}^{S}(\bfX_{i,S}) = 1,
\]
so that
\[
   \grad \phi_{i;1}^{S} \equiv \frac{1}{h}(0,1), \qquad
   \grad \phi_{i;2}^{S} \equiv \frac{1}{h}(1,0), \qquad
   \grad \phi_{i;3}^{S} \equiv -\frac{1}{h}(1,1), \qquad
   \mbox{on } T_i^{S}.
\]
\item
On the $SW$-triangle $T_i^{SW}$ we construct the linear basis functions $\phi_{i;1}^{SW}$,
$\phi_{i;2}^{SW}$, $\phi_{i;3}^{SW}$ defined by
\[
   \phi_{i;1}^{SW}(\bfX_i) = 1, \quad \phi_{i;1}^{SW}(\bfX_{i,S}) = 0, \quad \phi_{i;1}^{SW}(\bfX_{i,W}) = 0,\\
   \phi_{i;2}^{SW}(\bfX_i) = 0, \quad \phi_{i;2}^{SW}(\bfX_{i,S}) = 1, \quad \phi_{i;2}^{SW}(\bfX_{i,W}) = 0,\\
   \phi_{i;3}^{SW}(\bfX_i) = 0, \quad \phi_{i;3}^{SW}(\bfX_{i,S}) = 0, \quad \phi_{i;3}^{SW}(\bfX_{i,W}) = 1,
\]
so that
\[
   \grad \phi_{i;1}^{SW} \equiv \frac{1}{h}(1,1), \qquad
   \grad \phi_{i;2}^{SW} \equiv -\frac{1}{h}(0,1), \qquad
   \grad \phi_{i;3}^{SW} \equiv -\frac{1}{h}(1,0), \qquad
   \mbox{on } T_i^{SW}.
\]
\item
On the $NW$-triangle $T_i^{NW}$ we construct the linear basis functions $\phi_{i;1}^{NW}$,
$\phi_{i;2}^{NW}$, $\phi_{i;3}^{NW}$ defined by
\[
   \phi_{i;1}^{NW}(\bfX_i) = 1, \quad \phi_{i;1}^{NW}(\bfX_{i,W}) = 0, \quad \phi_{i;1}^{NW}(\bfX_{i,NW}) = 0,\\
   \phi_{i;2}^{NW}(\bfX_i) = 0, \quad \phi_{i;2}^{NW}(\bfX_{i,W}) = 1, \quad \phi_{i;2}^{NW}(\bfX_{i,NW}) = 0,\\
   \phi_{i;3}^{NW}(\bfX_i) = 0, \quad \phi_{i;3}^{NW}(\bfX_{i,W}) = 0, \quad \phi_{i;3}^{NW}(\bfX_{i,NW}) = 1,
\]
so that
\[
   \grad \phi_{i;1}^{NW} \equiv \frac{1}{h}(1,0), \qquad
   \grad \phi_{i;2}^{NW} \equiv -\frac{1}{h}(1,1), \qquad
   \grad \phi_{i;3}^{NW} \equiv \frac{1}{h}(0,1), \qquad
   \mbox{on } T_i^{NW}.
\]
\item
On the $N$-triangle $T_i^{N}$ we construct the linear basis functions $\phi_{i;1}^{N}$,
$\phi_{i;2}^{N}$, $\phi_{i;3}^{N}$ defined by
\[
   \phi_{i;1}^{N}(\bfX_i) = 1, \quad \phi_{i;1}^{N}(\bfX_{i,NW}) = 0, \quad \phi_{i;1}^{N}(\bfX_{i,N}) = 0,\\
   \phi_{i;2}^{N}(\bfX_i) = 0, \quad \phi_{i;2}^{N}(\bfX_{i,NW}) = 1, \quad \phi_{i;2}^{N}(\bfX_{i,N}) = 0,\\
   \phi_{i;3}^{N}(\bfX_i) = 0, \quad \phi_{i;3}^{N}(\bfX_{i,NW}) = 0, \quad \phi_{i;3}^{N}(\bfX_{i,N}) = 1,
\]
so that
\[
   \grad \phi_{i;1}^{N} \equiv -\frac{1}{h}(0,1), \qquad
   \grad \phi_{i;2}^{N} \equiv -\frac{1}{h}(1,0), \qquad
   \grad \phi_{i;3}^{N} \equiv \frac{1}{h}(1,1), \qquad
   \mbox{on } T_i^{N}.
\]

\subsection{Gradients of $p^h$}\label{subsec:App2}
Here we provide the gradient of the solution $p^h\in W^h$ of \eqref{PoissonFEM}, constructed in Section \ref{subsec:FEM-Poisson}.
Since $p^h$ is continuous on $\Omega$ and linear on each triangle in $\calT^h$,
it is represented by its vertex values $P_i^h := p^h(\bfX_i)$, $i\in \mathbb{V}$.
Then, for any interior node $i\in \mathbb{V}$ we readily have
\[
   \grad p^h = \frac{1}{h}
  \begin{cases}
    ( P^h_{i,E} - P^h_i, P^h_{i,N} - P^h_i) \qquad\mbox{on } T_i^{NE}, \\
    ( P^h_{i,E} - P^h_i, P^h_i - P^h_{i,SE}) \qquad\mbox{on } T_i^{SE}, \\
    ( P^h_{i,SE} - P^h_i, P^h_i - P^h_{i,S}) \qquad\mbox{on } T_i^{S}, \\
    ( P^h_i - P^h_{i,W}, P^h_i - P^h_{i,S}) \qquad\mbox{on } T_i^{SW}, \\
    ( P^h_i - P^h_{i,W}, P^h_{i,NW} - P^h_i) \qquad\mbox{on } T_i^{NW}, \\
    ( P^h_i - P^h_{i,NW}, P^h_{i,N} - P^h_i) \qquad\mbox{on } T_i^{N}.
   \end{cases}
\]

\subsection{Explicit calculation for \eqref{CforA}}\label{subsec:App3}
Finally, we provide the detailed calculation for the identity \eqref{CforA}.
Noting that $\psi^h_i$ is supported on $U_i = T_i^{NE} \cup T_i^{SE} \cup T_i^{S} \cup T_i^{SW} \cup T_i^{NW} \cup T_i^{N}$,
and taking into account the results listed in Sections \ref{subsec:App1} and \ref{subsec:App2}, we have 
\[
   \int_{T_i^{NE}} \grad p^h \cdot \left(rI + \Q_0^h[C]\right) \grad \psi^h_i \d \x &=& \frac{r+C_{i}^E}{2} \left( P_i^h - P^h_{i,E} \right) + \frac{r+C_{i}^N}{2} \left( P_i^h - P^h_{i,N} \right),\\
   \int_{T_i^{SE}} \grad p^h \cdot \left(r I+ \Q_0^h[C]\right) \grad \psi^h_i \d \x &=& \frac{r+C_{i}^E}{2} \left( P_i^h - P^h_{i,E} \right),\\
   \int_{T_i^{S}} \grad p^h \cdot \left(r I+ \Q_0^h[C]\right) \grad \psi^h_i \d \x &=& \frac{r+C_{i}^S}{2} \left( P_i^h - P^h_{i,S} \right),\\
   \int_{T_i^{SW}} \grad p^h \cdot \left(r I+ \Q_0^h[C]\right) \grad \psi^h_i \d \x &=& \frac{r+C_{i}^W}{2} \left( P_i^h - P^h_{i,W} \right) + \frac{r+C_{i}^S}{2} \left( P_i^h - P^h_{i,S} \right),\\
   \int_{T_i^{NW}} \grad p^h \cdot \left(r I+ \Q_0^h[C]\right) \grad \psi^h_i \d \x &=& \frac{r+C_{i}^W}{2} \left( P_{i}^h - P^h_{i,W} \right),\\
   \int_{T_i^{N}} \grad p^h \cdot \left(rI + \Q_0^h[C]\right) \grad \psi^h_i \d \x &=& \frac{r+C_{i}^N}{2} \left( P_i^h - P^h_{i,N} \right).
\]
Summing up, we arrive at
\[
   \int_\Omega  \grad p^h \cdot (rI+\Q_0^h[C]) \grad \psi^h_i \d \x &=&
      (r+C_{i}^E) \left( P_i^h - P^h_{i,E} \right) + (r+C_{i}^N) \left( P_i^h - P^h_{i,N} \right)
      \\ &+&  (r+C_{i}^W) \left( P_i^h - P^h_{i,W} \right) + (r+C_{i}^S) \left( P_i^h - P^h_{i,S} \right),
\]
which is \eqref{CforA}.
\end{itemize}

\section*{Acknowledgments}
LMK was supported by the UK Engineering and Physical Sciences Research Council (EPSRC) grant
EP/L016516/1 and the German National Academic Foundation (Studienstiftung des Deutschen Volkes). 


\end{document}